\title{Algebraic invariants of graphs; a study based on computer exploration}
\author{Nicolas M. Thiéry}
\affiliation{
  Laboratoire de Mathématiques Discrètes\\
  Université Lyon I, 43 bd du 11 novembre\\
  69622 Villeurbanne Cedex\\
  \texttt{nthiery@users.sourceforge.net}\\
  \texttt{http://www.mines.edu/\string~nthiery/}}
\newtheorem{theorem}{Theorem}[section]
\newtheorem{lemma}[theorem]{Lemma}
\newtheorem{proposition}[theorem]{Proposition}
\newtheorem{corollary}[theorem]{Corollary}
\newtheorem{problem}[theorem]{Problem}
\newtheorem{conjecture}[theorem]{Conjecture}
\newtheorem{condition}[theorem]{Condition}
\newtheorem{caracterisation}[theorem]{Caracterisation}
\newtheoremstyle{algo}
  {3pt}
  {3pt}
  {}
  {}
  {\bfseries}
  {}
  {.5em}
  {}
\theoremstyle{algo} \newtheorem{algorithm}[theorem]{Algorithm}}
\newcommand{\ie}{\emph{i.e.~}}
\newcommand{\apriori}{\emph{a priori}\xspace}
\renewcommand{\ij}{{\{i,j\}}}		
\newcommand{\sym}{\mathfrak S}
\newcommand{\N}{\mathbb{N}}
\newcommand{\K}{\mathbb{K}}
\newcommand{\C}{\mathbb{C}}
\renewcommand{\c}{\mathbf{c}}
\newcommand{\x}{\mathbf{x}}
\newcommand{\e}{\mathbf{e}}
\newcommand{\E}{\mathbf{E}}
\newcommand{\g}{\mathbf{g}}
\newcommand{\h}{\mathbf{h}}
\newcommand{\G}[1][n]{\mathcal{G}_{#1}}
\newcommand{\V}[1][n]{\mathcal{V}_{#1}}
\renewcommand{\v}{\mathbf{v}}
\newcommand{\w}{\mathbf{w}}
\newcommand{\Pol}[1][n]{\K[x_\ij]}
\newcommand{\Inv}[1][n]{\mathcal{I}_{#1}} 
\newcommand{\Inf}{\Inv[\infty]}
\newcommand{\OInv}{\overrightarrow\Inv}
\newcommand{\permuvar}{\texttt{PerMuVAR}\xspace}
\newcommand{\mupad}{\texttt{MuPAD}\xspace}
\newcommand{\magma}{\texttt{Magma}\xspace}
\newcommand{\maple}{\texttt{Maple}\xspace}
\newcommand{\invar}{\texttt{Invar}\xspace}
\newcommand{\fgb}{\texttt{FGb}\xspace}
\DeclareMathOperator{\MGSGS}{s}
\DeclareMathOperator{\GL}{GL}
\DeclareMathOperator{\SL}{SL}
\DeclareMathOperator{\Aut}{Aut}
\DeclareMathOperator{\sign}{sign}
\newcommand{\myvcentermiddle}[1]{%
  \setbox0\hbox{#1}%
  \dimen255=.5\dp0%
  \advance\dimen255 by -.5\ht0%
  \advance\dimen255 by .5ex%
  \ifvmode\hskip0em\fi\raise\dimen255\box0}
\newcommand{\st}{\,\,|\,\,}
\newcommand{\oast}{{\odot\!\!\!\!\ast}}
\newcommand{\exps}[1]{{\x^{{#1}}}^{\oast}}
\newcommand{\expm}[1]{\x^{#1}}
\newcommand{\gnuplotscale}{0.66}
\newcommand{\graph}[2][scale=\graphscale]{\includegraphics[#1]{#2}}
\renewcommand{\graph}[2][scale=\graphscale]{\myvcentermiddle{\begingroup
    \includegraphics[#1]{#2} \endgroup}}
\newcommand{\pgraph}[2][scale=\graphscale]{{\left(\!\graph[#1]{#2}\!\!\!\right)^\oast}\,}
\begin{document}

\maketitle

\toappear{Not printed}
\pagestyle{myheadings}
\def\runninghead{Thi\'ery: Algebraic invariants of graphs}

\begin{abstract}
  We consider the ring $\Inv$ of polynomial invariants over weighted
  graphs on $n$ vertices. Our primary interest is the use of this ring
  to define and explore algebraic versions of isomorphism problems of
  graphs, such as Ulam's reconstruction conjecture.

  There is a huge body of literature on invariant theory which
  provides both general results and algorithms. However, there is a
  combinatorial explosion in the computations involved and, to our
  knowledge, the ring $\Inv$ has only been completely described for
  $n\le4$.

  This led us to study the ring $\Inv$ in its own right. We used
  intensive computer exploration for small $n$, and developed
  \permuvar, a library for \mupad, for computing in invariant rings of
  permutation groups.

  We present general properties of the ring $\Inv$, as well as results
  obtained by computer exploration for small $n$, including the
  construction of a medium sized generating set for $\Inv[5]$. We
  address several conjectures suggested by those results (low degree
  system of parameters, unimodality), for $\Inv$ as well as for more
  general invariant rings.  We also show that some particular sets are
  not generating, disproving a conjecture of Pouzet related to
  reconstruction, as well as a lemma of Grigoriev on the invariant
  ring over digraphs.  We finally provide a very simple minimal
  generating set of the field of invariants.
\end{abstract}

\section*{Introduction}

Let $\K$ be a field of characteristic zero, $n$ be a positive integer,
and $\{x_{\{1,2\}}, \dots, x_{\{n-1,n\}}\}$ be a set of $\binom n 2$
variables indexed by the pairs $\ij$ of $\{1,\dots,n\}$. The symmetric
group $\sym_n$ acts naturally on those variables by
\begin{displaymath}
  \sigma\cdot x_\ij:=x_{\{\sigma(i),\sigma(j)\}}.
\end{displaymath}
Let $\Pol$ be the ring of polynomials in $x_\ij$. We study the subring
$\Inv:=\Pol^{\sym_n}$ of the polynomials which remain invariant under
the action of $\sym_n$.

Our motivation comes from graph theory and in particular from graph
reconstruction. Pouzet~\cite{Pouzet.1977,Pouzet.1979} formulated an
algebraic reconstruction conjecture for $\Inv$, which implies Ulam's
famous reconstruction conjecture for weighted
graphs~\cite{Bondy.1991}. We disprove Pouzet's conjecture. Kocay
proposed a similar conjecture, by introducing the \emph{algebra of
  subgraphs}~\cite{Kocay.1982,Cameron.1996}. This algebra is a
quotient of $\Inv$; however this quotient is not graded, and we cannot
apply our method to disprove Kocay's conjecture.

The ring $\Inv$ can also be used to study the shape of sets of
vectors~\cite{Aslaksen_al.1996}. Our primary goal is to construct
complete systems of invariants (systems that separate weighted graphs
up to isomorphism), and in particular minimal generating sets of
$\Inv$.

In \S~\ref{preliminaries}, we introduce the representation $\G$ of the
symmetric group $\sym_n$ over the vector space $\V$ of weighted graphs
on $n$ vertices, and the associated invariant ring $\Inv$. We review
classical results and tools provided by invariant theory (finite
generation, grading, Hilbert series). Since $\G$ is a permutation
group, there is a combinatorial interpretation of the invariant ring,
and a reasonably fast algorithm for computing the Hilbert series. We
also review some general properties of minimal generating sets, and
the definition of the smallest degree bound $\beta(\Inv)$.

\S~\ref{generating_sets} is devoted to generating sets of $\Inv$. We
provide a finite generating set.  By studying the Hilbert series, we
show that two other sets are not generating, disproving Pouzet's
conjecture. We also prove that, for many common monomial orders,
$\Inv$ has no finite SAGBI basis.

Finding a good degree bound is crucial. In \S~\ref{Hironaka}, we
recall how Hironaka decompositions of $\Inv$ can be used to obtain the
degree bound $\beta(\Inv)\le\binom{\binom n 2}2-\mu_n$, where $\mu_n$
is a non-negative $O(n)$ integer. We calculate $\mu_n$ by constructing
minimal multigraphs without odd automorphisms.

In \S~\ref{parameters}, we try to refine the degree bound by
constructing low degree systems of parameters. The study of the
Hilbert series for $n\le21$, combined with a conjecture of Mallows
and Sloane, suggests the existence of a system of parameters composed
of invariants of degrees $1,2,\dots,n, 2,3,\dots, \binom{n-1}2$. This
would give \mbox{$\beta(\Inv)\le\binom{\binom{n-1}2}2-\mu_n$}. We
propose a natural construction for such a low degree system of
parameters, and check its validity for $n\le5$ using a Gröbner basis
computation. Unfortunately, this computation is intractable for
$n\ge6$. Such a system of parameters seems to have nearly optimally
low degrees; therefore, this technique cannot be refined much further
in order to get better degree bounds.

\S~\ref{MGS} is devoted to the computation of minimal generating sets.
For $n=4$, a minimal generating set was first constructed by hand by
Aslaksen, Chan et Gulliksen~\cite{Aslaksen_al.1996}; it can now be
computed in a few seconds by invariant theory software (e.g. Kemper's
packages in \maple~\cite{Kemper.Invar} or \magma~\cite{Kemper.1998}).
However, for $n\ge5$, these software packages are unable to compute
even partial minimal generating sets. We wrote
\permuvar~\cite{Thiery.P.2001}, a library of invariant theory routines
for \mupad, which uses the usual
algorithms~\cite{Sturmfels.AIT,Kemper.1998}, but is specialized for
permutation groups. This allows us to go a step further: for $n=5$, we
compute a partial minimal generating set, containing $57$ polynomials
of degree $\le 9$\footnote{Using \emph{ad hoc} computations,
  Kemper~\cite{Kemper.IG5} checked recently that this system was
  indeed a complete minimal generating set, thus proving that
  $\beta(\Inv[5])=9$.}. This suggests a much better degree bound:
$\beta(\Inv)=\binom n 2 -1$.

In \S~\ref{Gorenstein}, we prove that the invariant ring $\Inv$ is
Gorenstein when $n$ is even. This fact could be used to accelerate the
computations of Hironaka decompositions~\cite{Thiery.P.2001}.

We introduce in \S~\ref{chain_product} the chain product (a naive
interpretation of Stanley-Reisner rings~\cite{Garsia_Stanton.1984}).
This allows for faster computations of generating sets at the expense
of non-minimality~\cite{Thiery.P.2001}: we obtain a generating set of
$\Inv[5]$ containing about one thousand polynomials of degree $\le22$.

In \S~\ref{infinity} the projective limit $\Inf$ is used to obtain
results about $\Inv$; this includes the lower bound $\beta(\Inv)\ge
\lfloor\frac n 2\rfloor$.

\S~\ref{unimodality} presents various unimodality properties revealed
by computer exploration, for $\Inv$ as well as for more general
invariant rings.

Grigoriev~\cite{Grigoriev.1979} introduces a related invariant ring
over digraphs. In \S~\ref{digraphs}, we apply the Hilbert series tool
of \S~\ref{generating_sets} to disprove lemma~1 of
\cite{Grigoriev.1979}. We also provide a simple counter-example.
Finally, in \S~\ref{invariant_field}, we study the field of
invariants. Grigoriev~\cite{Grigoriev.1979} gives a non-constructive
proof for the existence of a small generating set of the field of
invariants (the proof of the degree bound is incorrect though, since
it relies on lemma~1 of \cite{Grigoriev.1979}). We construct such a
small generating set, composed of the elementary symmetric polynomials
and a very simple invariant of degree $2$; to the contrary of
Grigoriev's assertion, it is not a complete system of invariants. We
also derive a minimality property of the invariant ring, by using
basic Galois theory on the field of invariants.

The results presented in this paper are part of the Ph.~D.
thesis~\cite{Thiery.IAGR} of the author. We refer to this document for
the detailed proofs.

\section{The invariant ring over graphs}

\label{preliminaries}

\subsection{Valuated graphs as a vector space}

\label{valuated_graphs}

Let $V$ be a $\K$-vector space of finite dimension $m$, and $G$ be a
finite subgroup of $\GL(V)$. Tacitly, we interpret $G$ as a group of
$m\times m$ matrices or as a representation on $V$. Two vectors $\v$
and $\w$ are \emph{isomorphic}, or in the same \emph{$G$-orbit} (for
short \emph{orbit}), if $\sigma\cdot\v=\w$ for some $\sigma\in G$.

Let $n$ be a positive integer. We consider labelled, undirected graphs
on the vertices $\{1,\dots,n\}$, without loops, and whose edges are
weighted in $\K$. A \emph{simple graph} is a graph with weights in
$\{0,1\}$, and a \emph{multigraph} is a graph with weights in $\N$.
For any pair $\ij$, let $\e_\ij$ be the simple graph with one single
edge $\ij$. The set of all graphs is a $\K$-vector space $\V$ of
dimension $m:=\binom n 2$ with basis
$\{\e_{\{1,2\}},\dots,\e_{\{n-1,n\}}\}$. Indeed, any graph $\g$ can be
written uniquely as $\g:=\sum g_\ij \e_\ij$, where $g_\ij$ is the
weight of the edge $\ij$. Let $\{x_{\{1,2\}},\dots,x_{\{n-1,n\}}\}$ be
the dual basis ($x_\ij(\g)$ is the weight $g_\ij$ of the graph $\g$ on
the edge $\ij$).

Throughout the text, we denote objects attached to $\V$ by cursive
symbols, and objects attached to the generic vector space $V$ by
ordinary symbols. Let $\sym_n$ be the symmetric group of all
permutations of the $n$ vertices. Our group $\G$ is the linear
representation of $\sym_n$ defined on the basis of $\V$ by
$\sigma\cdot\e_\ij:=\e_{\{\sigma(i),\sigma(j)\}}$. The notion of
isomorphism defined above coincides with the usual notion of
isomorphism of graphs. Orbits of labelled graphs are called
\emph{unlabelled graphs}. Unless otherwise stated, all graphs are
unlabelled.

The representation of $\G$ on $\V$ splits into three irreducible
components: $[n]\oplus[n-1,1]\oplus[n-2,2]$, where $[n-2,2]$
represents the irreducible representation of $\sym_n$ indexed by the
partition $\lambda=(n-2,2)$ of $n$
\cite{Aslaksen_al.1996,Fulton_Harris.RT}. The first component has
dimension $1$ and corresponds to the vector space spanned by the
complete graph. The sum $[n]\oplus[n-1,1]$ of the first two components
is of dimension $n$, and corresponds to the vector space spanned by
the $n$ \emph{stars} $\E_1,\dots,\E_n$, where $\E_i:=\sum_{j\neq i}
\e_\ij$.
This representation is the natural representation of $\sym_n$ by
permutation of $\E_1,\dots,\E_n$. Let $X_1,\dots,X_n$ be the basis of
the dual, defined by $X_i:=\sum_{j\neq i} x_\ij$. If $\g$ is a graph,
$X_i(\g)$ is the degree of the vertex $i$ of $\g$. Finally, the last
irreducible component $[n-2,2]$ is the orthogonal of the two previous
components, that is the subspace of all \emph{$0$-regular graphs}
(graphs where each vertex as degree $0$).

\subsection{The invariant ring}

Recall that if $G$ acts on $V$, a \emph{complete system of invariants}
is a set $S$ of functions such that two elements $\v$ and $\w$ of $V$
are in the same orbit if and only if they give the same value to all
functions in $S$ (\ie $p(\v)=p(\w)$ for all $p\in S$). Our primary
goal is to construct, or at least find information about, complete
systems of invariants. We introduce the invariant ring of $G$ which
provides a mechanical way to do this. We refer
to~\cite{Stanley.1979,Sturmfels.AIT,Cox_al.IVA,Smith.1997,Kemper.1998}
for classical literature on invariant theory of finite groups. Parts
of what follows are strongly inspired by~\cite{Kemper.1998}.

Let $(x_1,\dots,x_m)$ be a basis of the dual of $V$; for $\V$, we take
$(x_1,\dots,x_m):=(x_{\{1,2\}},\dots,x_{\{n-1,n\}})$. Let
$\K[x_1,\dots,x_m]$ be the ring of polynomials over $V$. The action of
$G$ on $V$ extends naturally to an action of $G$ on $K[x_1,\dots,x_m]$
by $\sigma\cdot p:=p\circ \sigma^{-1}$. An \emph{invariant polynomial}, or
\emph{invariant}, is a polynomial $p\in K[x_1,\dots,x_m]$ such that
$\sigma\cdot p=p$ for all $\sigma\in G$. The \emph{invariant ring} $I(G)$ is
the set of all invariants. We call $\Inv:=I(\G)$ the \emph{invariant
  ring over graphs}. Note that
$\sigma\cdot x_\ij:=x_\ij\circ\sigma^{-1}=x_{\{\sigma(i),\sigma(j)\}}$.

Obviously, $I(G)$ is a $\K$-algebra. Hilbert's famous theorem states
that $I(G)$ is \emph{finitely generated}: there exists a finite set
$S$ of invariants such that any invariant can be expressed as a
polynomial combination of invariants in $S$. We call $S$ a
\emph{generating set}. If no proper subset of $S$ is generating, $S$
is a \emph{minimal generating set}. Since $I(G)$ is finitely
generated, there exists a degree bound $d$ such that $I(G)$ is
generated by the set of all invariants of degree at most $d$. We
denote by $\beta(I(G))$ the \emph{smallest degree bound}.

There exist algorithms to compute (minimal) generating sets, and a
basic result of invariant theory states that they are complete systems
of invariants. However, this often leads to very intensive
computations, and rather large complete systems of invariants.

\subsection{Invariant ring of a permutation group}

\label{invariants_permutation}

The most famous invariant ring is the ring of symmetric polynomials
$I(\sym_m)$, defined by the natural action of $\sym_m$ on the
variables $(x_1,\dots,x_m)$. The fundamental theorem of symmetric
polynomials~\cite[p.~2]{Sturmfels.AIT} states that $I(\sym_m)$ is
generated by $m$ algebraically independent symmetric polynomials, for
example the $m$ elementary symmetric polynomials or the first $m$
symmetric power sums.

$\G$ is a \emph{permutation group}, since it acts by permuting the
variables $x_\ij$; thus, we also view $\G$ as a subgroup of the full
group $\sym_m$ of the permutations of $m$ variables. This results in
several convenient and powerful combinatorial interpretations of
invariants:

(i) A labelled multigraph $\g:=(g_{\{1,2\}},\dots,g_{\{n-1,n\}})$ can
be identified with the monomial
$\expm\g:=x_{\{1,2\}}^{g_{\{1,2\}}}\dots
x_{\{n-1,n\}}^{g_{\{n-1,n\}}}$. The \emph{exponential} of $\g$ is the
polynomial $\exps\g:=\sum_\h\expm\h$, where $\h$ belongs to the orbit
of $\g$. The polynomial $\exps\g$ is invariant, and is well defined
even if $\g$ is unlabelled. The exponential therefore identifies
unlabelled graphs with some particular invariants. Moreover, the set
of all invariants $\exps\g$, where $\g$ is a multigraph, is a vector
space basis of $\Inv$. Note that the exponential differs from the
usual Reynolds operator ${}^*$ by a multiplicative factor:
$\exps\g=|\Aut(\g)|({\expm\g})^*$, where $|\Aut(\g)|$ is the size of
the automorphism group of $\g$.

(ii) Let $\g_1$ and $\g_2$ be two multigraphs on $n$ vertices. The
product $\exps{\g_1}\exps{\g_2}$ is a linear combination of all
possible superpositions of $\g_1$ and $\g_2$ (with, at times,
counter-intuitive coefficients). For instance:
\begin{displaymath}
  \pgraph{autogen/ugraph-1110000000}\times\pgraph{autogen/ugraph-1000000000}=
  \pgraph{autogen/ugraph-2110000000}
  +\pgraph{autogen/ugraph-1111000000}
  +\pgraph{autogen/ugraph-1110000001}.
\end{displaymath}
Let $n'>n$, and consider the multigraphs $\overline\g_1$ and
$\overline\g_2$ obtained by adding $n'-n$ isolated vertices to $\g_1$
and $\g_2$. New superpositions, which fit in $n'$ vertices but not in
$n$ vertices, may appear in the product
$\exps{\overline\g_1}\exps{\overline\g_2}$. However, the use of a
modified Reynolds operator ensures that the coefficients in the linear
combination do not change. This makes the product somewhat independent
of $n$ (see \S~\ref{infinity}).

(iii) If $\g$ and $\h$ are simple graphs, $\exps\g(\h)$ counts the
number $s(\g,\h)$ of subgraphs of $\h$ isomorphic to $\g$. The
following invariants can be used to count respectively the number of
edges, the number of pairs of adjacent edges and the number of
Hamiltonian cycles of $\h$:
\begin{xalignat*}{3}
  \pgraph{autogen/ugraph-1000000000}(\h), &&
  \pgraph{autogen/ugraph-1100000000}(\h), &&
  \pgraph{autogen/ugraph-1010011001}(\h).
\end{xalignat*}
Manipulations of the quantities $s(\g,\h)$ are the cornerstone of
several results on reconstruction of graphs \cite{Bondy.1991};
for the use of these algebraic considerations
see~\cite{Pouzet_Thiery.IAGR.2001}.

\subsection{Grading, Hilbert series and degree bound}

Powerful properties of an invariant ring are its grading and the
associated Hilbert series. As a $\K$-vector space, $I(G)$ is not
finite dimensional. However, since the action of $G$ preserves the
degree of polynomials, $I(G)$ decomposes into the direct sum of its
homogeneous components:
\begin{displaymath}
  I(G)=\bigoplus_{d=0}^\infty I(G)_d,
\end{displaymath}
where $I(G)_d$ is the finite dimensional vector space of all
homogeneous invariants of degree $d$. The Hilbert series of $I(G)$ is
the generating series of its dimensions:
\begin{displaymath}
  H(I(G),z):=\sum_{d=0}^\infty z^d \dim I(G)_d.
\end{displaymath}

For general finite groups of matrices, this series can be computed by
averaging over the group, through Molien's formula. However, since
$\G$ is a permutation group, the set of all invariants $\exps\g$,
where $\g$ is a multigraph with $d$ edges is a vector space basis of
$\Inv[n,d]$. Therefore, computing $H(\Inv,z)$ reduces to a Pólya
enumeration of multigraphs with respect to the number of
edges~\cite{Stanley.1979}. Recall that the conjugacy classes
$C_\lambda$ of $\sym_n$ are indexed by the partitions $\lambda$ of
$n$. Let $\sigma$ be a permutation of the vertices in $C_\lambda$. The
cycle type of the induced permutation of the edges is easily computed
from the cycle type of $\sigma$, \ie from the partition
$\lambda$~\cite{Harary_Palmer.GE}. We denote by
$l_1(\lambda),\dots,l_m(\lambda)$ this cycle type. Then,
\begin{displaymath}
  H(\Inv,z)=\frac{1}{n!}\sum_{\lambda} |C_\lambda|\frac 1{\prod(1-z^i)^{l_i(\lambda)}},
\end{displaymath}
where the sum is over all partitions $\lambda$ of $n$. This provides
an algorithm whose complexity is about $O(n^4\exp(n^{0.8}))$. Concretely,
we can compute $H(\Inv,z)$ for $n\le21$. It is sometimes useful to
consider the multigraded Hilbert series, where each grading
corresponds to one of the three irreducible components of the
representation $\G$. We can compute this multigraded Hilbert series
for $n\le15$.

Given an integer $d\ge1$, let $\K[I(G)_{<d}]$ be the subalgebra of
$I(G)$ generated by invariants of degree $<d$, and $\K[I(G)_{<d}]_d$
its homogeneous component of degree $d$. Set $\MGSGS_0(I(G)):=0$ and
$\MGSGS_d(I(G)):=\dim I(G)_d - \dim \K[I(G)_{<d}]_d$. The generating
series $\MGSGS(I(G),z):=\sum_{d=0}^\infty z^d \MGSGS_d(I(G))$ is a
polynomial of degree $\beta(I(G))$.

A set $S$ is \emph{homogeneous} if its elements are also homogeneous.
The following lemma (valid for any graded algebra $A$, where $A_0$ is
the ground field $\K$) summarizes some general properties of
generating sets.
\begin{lemma} Let $S$ be a generating set of $I(G)$.

  (i) $I(G)$ has a homogeneous minimal generating set composed of at
  most $|S|\beta(I(G))$ invariants of degree at most $\beta(I(G))$.

  (ii) Assume $S$ is homogeneous, and let $S_d$ be the set of all
  invariants of $S$ having degree $d$. Then, $S$ is a minimal
  homogeneous generating set if and only if for all $d$, $S_d$ is a
  vector space basis of a direct factor of $\K[I(G)_{<d}]_d$ in
  $I(G)_d$. In particular, $|S_d|=\MGSGS_d(I(G))$.
\end{lemma}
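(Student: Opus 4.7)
The plan for part (i) is first to homogenize and then to truncate at degree $\beta:=\beta(I(G))$. Since $G$ acts linearly on $\K[x_1,\dots,x_m]$ its action preserves degree, so every homogeneous component of an invariant is again invariant; decomposing each $p\in S$ into homogeneous components therefore yields a homogeneous generating set $S^h$. Let $T\subseteq S^h$ consist of those components of positive degree at most $\beta$. Since each $p\in S$ contributes at most $\beta$ such components (of degrees $1,2,\dots,\beta$), we have $|T|\le|S|\beta$. To see that $T$ still generates, observe that any polynomial expression in $S^h$ of total degree $d$ involves only factors of degree at most $d$, so $S^h_{\le\beta}=T$ generates $I(G)_{\le\beta}$ as a subalgebra; and by the definition of $\beta$ the latter generates $I(G)$. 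Extracting a minimal subset of $T$ that still generates $I(G)$ produces a minimal homogeneous generating set of size at most $|S|\beta$ with all elements of degree at most $\beta$.

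For part (ii), write $A_d:=\K[I(G)_{<d}]_d$. The key structural observation is that for any homogeneous generating set $S$, one always has $I(G)_d = A_d + \mathrm{span}(S_d)$. Indeed, write $f\in I(G)_d$ as a polynomial in $S$ of total degree $d$; each monomial appearing in such an expression either uses exactly one factor from $S_d$ (contributing to $\mathrm{span}(S_d)$) or uses only factors from $\bigcup_{k<d}S_k$ (contributing to $A_d$), since a product of two or more elements of positive degree could only reach degree $d$ with all factors of degree strictly less than $d$.

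The characterization now falls out by bookkeeping. For the backward direction, assuming each $S_d$ is a basis of a direct complement of $A_d$ in $I(G)_d$, the identity above together with induction on $d$ shows that $S$ generates; and deleting any $s_0\in S_d$ would, by the same identity applied to $s_0$ itself, place $s_0\in A_d+\mathrm{span}(S_d\setminus\{s_0\})$, contradicting the direct-sum assumption. For the forward direction, if $S$ is minimal, then a linear dependence inside $S_d$, or a nonzero element of $\mathrm{span}(S_d)\cap A_d$, would each let us solve for some $s_0\in S_d$ as a polynomial in $S\setminus\{s_0\}$, using that $S_{<d}\subseteq S\setminus\{s_0\}$; this contradicts minimality. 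The equality $|S_d|=\MGSGS_d(I(G))$ is then immediate from the definitions.

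I do not expect a serious obstacle in either part; both arguments are essentially bookkeeping. The one step worth getting right is the "single-factor-of-degree-$d$" observation in the decomposition $I(G)_d = A_d + \mathrm{span}(S_d)$, which underlies both directions of (ii) and also the argument that $T$ generates in (i).
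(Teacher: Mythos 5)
Your proof is correct and follows the same route as the paper's (deliberately terse) argument: homogenize and truncate at degree $\beta(I(G))$ for (i), and for (ii) the linear-algebra bookkeeping around the decomposition $I(G)_d=\K[I(G)_{<d}]_d+\mathrm{span}(S_d)$, which is exactly what the paper means by ``use the grading and basic linear algebra.'' The single-factor-of-degree-$d$ observation you flag is indeed the crux, and you handle it correctly.
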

\begin{proof}
  (i) For each $p\in S$ and $d$, let $p_d$ be the homogeneous
  component of degree $d$ of $p$. Since $I(G)$ is graded, it is
  generated by the set $\{p_d \st p\in S, 1\le d\le\beta(I(G))\}$.

  (ii) Use the grading and basic linear algebra.
\end{proof}
From (i), it is not very restrictive to consider only homogeneous
generating sets, since non-homogeneous generating sets are not much
smaller than homogeneous ones.

The Hilbert series provides a simple necessary condition to test if a
set $S$ of homogeneous invariants is generating. The following
proposition is valid for any graded algebra $A$, where $A_0$ is the
ground field $\K$. We stress the importance of the homogeneity of the
invariants. A series $s(z)$ is \emph{dominated} by a series $t(z)$ if
the coefficients of $s(z)$ are upper-bounded term by term by the
coefficients of $t(z)$.
\begin{condition}
  \label{cond.dimension}
  Let $S:=(p_1,\dots,p_t)$ be a homogeneous generating set, with
  respective degrees $(d_1,\dots,d_t)$. Then, the Hilbert series
  $H(I(G),z)$ is dominated by the series
  \begin{displaymath}
    F(d_1,\dots,d_t,z):=\frac{1}{(1-z^{d_1})\dots(1-z^{d_t})}.
  \end{displaymath}
\end{condition}
\begin{proof}
  As a vector space, the homogeneous component $I(G)_d$ is generated
  by the set of all the homogeneous products $p_1^{\lambda_1}\dots
  p_t^{\lambda_t}$ whose degree $d_1\lambda_1+\dots+d_t\lambda_t$ is
  $d$; those products are counted by the series $F(d_1,\dots,d_t,z)$.
\end{proof}
This apparently weak condition is in fact very powerful. In
particular, it leads to the proof of theorem~\ref{th.not_gen}, and to
the disproving of Grigoriev's lemma~1 of \cite{Grigoriev.1979} (see
\S~\ref{digraphs}).

\section{Generating sets of $\Inv$}

\label{generating_sets}

For $n=1,2,3$, the invariant ring is the ring of symmetric
polynomials; the elementary symmetric polynomials form a minimal
generating set. For $n=4$, Aslaksen et al.~\cite{Aslaksen_al.1996}
constructed by hand the following minimal generating set:
\begin{gather*}
  \left\{
    \pgraph{autogen/ugraph-100000}, \pgraph{autogen/ugraph-110000},
    \pgraph{autogen/ugraph-001100},
    \pgraph{autogen/ugraph-111000}, \pgraph{autogen/ugraph-110100},
  \right.\\
  \left.
    \pgraph{autogen/ugraph-101100},
    \pgraph{autogen/ugraph-111100},
    \pgraph{autogen/ugraph-101101}, \pgraph{autogen/ugraph-111110}
  \right\}.
\end{gather*}
At about the same time, we had proven independently a similar result
through a Gröbner basis computation with CoCoA~\cite{CoCoA}, using
theorem~2.7.9 of~\cite{Sturmfels.AIT}. However, our set was not
minimal since we had not removed the invariant $\exps\c$ where $\c$ is
the complete graph. The set above can now be computed in about one
second with Kemper's implementation of $\invar$ in
$\magma$~\cite{Kemper.1998}.

We now provide a large, but reasonable, finite generating set of
$\Inv$. A multigraph is \emph{quasi-connected} if it
has, at most, one non-trivial connected component. For example, $\g_1$
below is quasi-connected, but not $\g_2$:
\begin{xalignat*}{2}
  \g_1:=\graph{autogen/ugraph-0002001000}, &&
  \g_2:=\graph{autogen/ugraph-0012001000}.
\end{xalignat*}
\begin{proposition}
  \label{prop.connected_generate}
  (i) The homogeneous component $\Inv[n,d]$ has for vector space basis
  the set of all invariants $\exps{\c_1}\cdots\exps{\c_k}$, where each
  $\c_i$ is a quasi-connected multigraph with $n_i$ non-isolated
  vertices and $d_i$ edges, and where \mbox{$n_1+\dots+n_k\le n$} and
  $d_1+\dots+d_k=d$.

  (ii) The invariant ring $\Inv$ is generated by the set of all
  homogeneous invariants $\exps\g$, where $\g$ is a quasi-connected
  multigraph with at most $\beta(\Inv)$ edges.
\end{proposition}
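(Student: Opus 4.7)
The plan is to prove (i) by a triangularity argument keyed on the number of non-trivial connected components, and to derive (ii) as an immediate consequence.

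First I would observe that sending an unlabelled multigraph $\g$ on $n$ vertices with $d$ edges to the multiset of its non-trivial connected components $\c_1,\dots,\c_k$ is a bijection onto the multisets of non-trivial quasi-connected multigraphs satisfying $n_1+\dots+n_k\le n$ and $d_1+\dots+d_k=d$ (quasi-connected components without edges yield $\exps{\c_i}=1$ and can be discarded). Since $\{\exps\g\}$ is a vector space basis of $\Inv[n,d]$ by \S~\ref{invariants_permutation}(i), the proposed family has the correct cardinality, so it suffices to prove that it spans.

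To this end I would expand a product $\exps{\c_1}\cdots\exps{\c_k}$ using the superposition rule of \S~\ref{invariants_permutation}(ii): it equals $\sum_\g N_\g\exps\g$, where $N_\g$ counts tuples $(\h_1,\dots,\h_k)$ of labellings of the $\c_i$ whose edge-weights sum to $\g$. Order unlabelled multigraphs by their number of non-trivial components. Since each $\h_i$ is connected on its non-isolated vertex set $V_i$, two labellings sharing a vertex in $V_i\cap V_j$ merge their non-trivial components in $\h_1+\cdots+\h_k$; hence $\g$ has at most $k$ non-trivial components, with equality iff the $V_i$ are pairwise disjoint, which forces $\g$ to be the disjoint union $\g_0:=\c_1\sqcup\cdots\sqcup\c_k$. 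This configuration is feasible because $n_1+\cdots+n_k\le n$, and $N_{\g_0}$ is then a positive integer. The transition between $\{\exps\g\}$ and the proposed products is therefore upper triangular with non-zero diagonal, and by induction on the component count I can solve for each $\exps\g$ as a combination of such products, proving (i). Part (ii) follows immediately: by definition $\Inv$ is generated by its invariants of degree at most $\beta(\Inv)$, hence by $\{\exps\g\}$ with $\g$ having at most $\beta(\Inv)$ edges, and by (i) each such $\exps\g$ is a polynomial in $\exps{\c_i}$'s for quasi-connected $\c_i$ with $d_i\le d\le\beta(\Inv)$ edges.

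The main obstacle I expect is the careful bookkeeping of non-isolated versus isolated vertices in the superposition: one must verify that only overlaps between the non-isolated vertex sets of the labellings $\h_i$, not merely overlaps of their ambient $n$-vertex supports, decrease the component count, so that $\g_0$ is unambiguously the unique maximal term.
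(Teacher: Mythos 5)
Your argument is correct and follows essentially the same route as the paper: a triangular change of basis graded by the number of non-trivial connected components, with the unique maximal term of $\exps{\c_1}\cdots\exps{\c_k}$ being the disjoint union (realizable precisely because $n_1+\dots+n_k\le n$), and (ii) deduced immediately from (i) and the definition of $\beta(\Inv)$. If anything you are slightly more careful than the paper, which asserts ones on the diagonal of the triangular system, whereas the coefficient of the disjoint union is really the product of the factorials of the multiplicities of repeated components --- a positive integer, as you say, which is all that invertibility requires.
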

\begin{proof}
  (i) Let $\g$ be a multigraph with $n$ vertices and $k>1$ non-trivial
  connected components $\c_1,\dots,\c_k$. Let
  $\overline\c_1,\dots,\overline\c_1$ be the quasi-connected
  multigraphs on $n$ vertices obtained by adding isolated vertices to
  the $\c_i$. Obviously, $n_1+\dots+n_k\le n$, and $d_1+\dots+d_k=d$.
  Then:
  \begin{displaymath}
    \exps\g=
    \exps{\overline\c_1}\cdots\exps{\overline\c_k}-\sum_i\exps{\h_i},
  \end{displaymath}
  where the $\h_i$ are multigraphs with strictly less than $k$
  non-trivial connected components. For example:
  \begin{displaymath}
    \pgraph{autogen/ugraph-1110000001}=
    \pgraph{autogen/ugraph-1110000000}\times\pgraph{autogen/ugraph-0000000001}
    -\pgraph{autogen/ugraph-1111000000}
    -\pgraph{autogen/ugraph-2110000000}.
  \end{displaymath}
  By induction on the number $k$ of non-trivial connected components,
  $\g$ is a linear combination of products
  $\exps{\overline\c_1}\cdots\exps{\overline\c_k}$. In fact, we just
  inverted a triangular linear system with ones on the diagonal, and
  uniqueness follows.

  (ii) Use (i) and the definition of $\beta(\Inv)$.
\end{proof}
Obviously, in order to get a usable generating set, it is essential to
have a good bound for $\beta(\Inv)$.

We tried to use the technique of SAGBI basis. This is a powerful tool,
which generalizes Gröbner basis techniques for rings instead of ideals
\cite{Robbiano_Sweedler.1990}. The main drawback is that there exist
invariant rings with no finite SAGBI basis; this seems to be the case
for $\Inv$, at least for many common monomial orders.
\begin{theorem}
  There are no finite SAGBI basis for $\Inv$ if the monomial order is
  either lexicographic, degree lexicographic, or degree reverse
  lexicographic with the $n-1$ smallest variables corresponding to
  adjacent edges.
\end{theorem}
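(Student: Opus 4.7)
The plan is to reduce the statement to the non-finite generation of the initial algebra $\mathrm{in}(\Inv)$ as a $\K$-algebra, which by definition of a SAGBI basis is equivalent to the non-existence of a finite SAGBI basis. Since the exponentials $\exps\g$ form a vector space basis of $\Inv$ indexed by unlabelled multigraphs on $n$ vertices, the monoid of leading monomials of $\Inv$ is generated by the family $\{\mathrm{in}(\exps\g)\}$, so it suffices to show that this monoid is not finitely generated under each of the three orders.

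For each order I first describe the canonical labelling $\g^*$ of $\g$ that produces $\mathrm{in}(\exps\g) = \expm{\g^*}$. For the lex and degree lex orders, $\g^*$ is obtained by placing the largest edge weights on the lex-largest variables; for the degree reverse lex order with the $n-1$ smallest variables spanning a star at some vertex $v$, $\g^*$ is the labelling minimising the total weight on edges adjacent to $v$. In each case, one then has to understand $\mathrm{in}(\exps\g)$ purely combinatorially as the exponent vector associated with the canonical labelling.

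The heart of the argument is to exhibit, for each order, an infinite family $\{\g_k\}_{k\ge 1}$ of multigraphs on $n$ vertices whose leading monomials are \emph{indecomposable}: $\mathrm{in}(\exps{\g_k})$ cannot be written as a product $\mathrm{in}(\exps{\h_1})\cdots\mathrm{in}(\exps{\h_r})$ with each $\h_i$ having strictly fewer edges than $\g_k$. Since each indecomposable element must belong to any generating set of the monoid, infinitely many of them force non-finite generation. A natural candidate is to fix a small rigid skeleton (a pair of incident edges for lex, or a configuration forced away from the star at $v$ for degree reverse lex) and let a multiplicity $k$ grow on one specific edge; the shape of $\g^*_k$ then combinatorially forbids any nontrivial splitting, because a factorisation would require one factor to carry the whole heavy multiplicity together with the skeleton that pins it in place.

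The main obstacle is the case-by-case verification of indecomposability: one must rule out every pair $(\h_1, \h_2)$ whose canonical leading monomials could multiply to $\mathrm{in}(\exps{\g_k})$, by arguing that the forced position of the heavy multiplicity and of the skeleton in $\g^*_k$ cannot be split between two strict suborbits. Small cases can be checked directly with \permuvar to find a suitable family, and the general combinatorial pattern is then turned into a rigorous indecomposability argument; the detailed analysis for each of the three orders is carried out in~\cite{Thiery.IAGR}.
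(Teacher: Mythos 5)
Your proposal is correct and follows essentially the same route as the paper: the paper's proof likewise consists of showing, for each of the three orders, that there are infinitely many irreducible initial monomials (ones that cannot be written as a product of two smaller initial monomials), with the detailed case analysis deferred to~\cite{Thiery.IAGR}. The only ingredient the paper adds that you do not mention is that, for the lexicographic order, one can alternatively invoke Göbel's characterization of permutation groups admitting a finite SAGBI basis.
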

\begin{proof}
  We prove in each case that there is an infinite number of
  irreducible initial monomials (an initial monomial is irreducible if
  it cannot be written as product of two smaller initial monomials).
  For the lexicographic order, we can alternatively use Göbel's
  characterization of permutation groups with finite SAGBI
  basis~\cite{Goebel.1998}.
\end{proof}

The following theorem states that some sets are not generating. (i)
disproves a tempting generalization of the fundamental theorem of
symmetric functions, whereas (ii) disproves Pouzet's
conjecture~\cite{Pouzet.1977}, which would have implied Ulam's
reconstruction conjecture.
\begin{theorem}
  \label{th.not_gen}
  (i) For $n\ge5$, the set of all invariants $\exps\g$, where $\g$ is
  a simple graph, do not generate $\Inv$.

  (ii) For $11\le n\le18$, the set of all invariants $\exps\g$, where
  $\g$ is a multigraph with at least one isolated vertex, do not
  generate $\Inv$.
\end{theorem}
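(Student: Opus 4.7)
The plan is to apply the Hilbert series dominance criterion of Condition~\ref{cond.dimension}: if a homogeneous set $S=\{p_1,\ldots,p_t\}$ with degrees $(d_1,\ldots,d_t)$ generates $\Inv$, then coefficient-wise $H(\Inv,z)\le F(d_1,\ldots,d_t,z)=\prod_i (1-z^{d_i})^{-1}$. To refute the generating property it therefore suffices to exhibit a single degree $d$ at which $[z^d]H(\Inv,z)>[z^d]F$. Since $H(\Inv,z)$ is Pólya-computable via the formula of \S~\ref{preliminaries} up to $n=21$, and the degrees of both candidate sets are easy to enumerate, the whole proof reduces to a systematic coefficient comparison.

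For part (i), fix $n$ and let $s_k$ count the unlabelled simple graphs on $n$ vertices with $k$ edges. The candidate dominating series is
\[
F_{\mathrm{simple}}^{(n)}(z) \;=\; \prod_{k=1}^{\binom{n}{2}} \frac{1}{(1-z^k)^{s_k}}.
\]
I would carry out the comparison for $n=5$ first, where the first failing degree should be small and identifiable with the appearance of multigraphs carrying an edge of multiplicity $\ge 2$ that cannot be written as a polynomial in simple-graph exponentials. To cover every $n\ge 5$, the natural route is to isolate a fixed small degree $d$ together with a family of multigraph orbits whose count, as a function of $n$, outgrows the corresponding coefficient of $F_{\mathrm{simple}}^{(n)}(z)$; alternatively one could verify $n\le 21$ by direct computation and propagate to $n\ge 22$ using the projective-limit viewpoint of \S~\ref{infinity}.

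For part (ii), a multigraph on $n$ vertices with at least one isolated vertex is, up to $\sym_n$-orbit, exactly the datum of a multigraph on $n-1$ vertices; hence the number of degree-$k$ generators in the candidate set equals $[z^k]H(\Inv[n-1],z)$, and the dominating series becomes
\[
F_{\mathrm{iso}}^{(n)}(z) \;=\; \prod_{k\ge 1} \bigl(1-z^k\bigr)^{-[z^k]H(\Inv[n-1],z)}.
\]
For each $11\le n\le 18$, both $H(\Inv,z)$ and $F_{\mathrm{iso}}^{(n)}(z)$ are directly computable, and the proof is completed by tabulating the coefficient-wise difference and certifying a positive entry in some degree, which disproves Pouzet's conjecture in those cases.

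The main obstacle is not algebraic but combinatorial and computational. For (i), the delicate part is producing an argument uniform in $n$, either by identifying a failing degree whose discrepancy provably grows with $n$, or by chaining the small-$n$ failure through the stabilisation afforded by $\Inf$. For (ii) the window $11\le n\le 18$ is itself informative: below $11$ the coefficient inequality goes the wrong way, so the criterion is inconclusive even though one still expects the set to be non-generating, while above $18$ either the same phenomenon recurs or the Pólya computation runs past the practical ceiling of \S~\ref{preliminaries}. Pinning these boundaries down cleanly, and reading off the specific failing degrees in each case, is where the real labour lies.
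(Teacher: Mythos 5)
Your overall strategy --- compute $H(\Inv,z)$ by P\'olya enumeration, enumerate the degrees of the candidate set, and exhibit a degree where Condition~\ref{cond.dimension} fails --- is exactly the paper's. For part (i) your plan is essentially complete: the paper takes the failing degree to be $d=4$, verifies the coefficient deficit for $n=5,6,7,8$, and then observes that for $n\ge9$ no new isomorphism type of multigraph with at most $4$ edges appears, so the degree-$4$ coefficients of both series are frozen at their $n=8$ values; this is precisely the stabilisation you point to via $\Inf$, so committing to $d=4$ turns your ``either/or'' into a proof.

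For part (ii), however, there is a genuine gap. You apply the dominance test directly to the full set $S$ of exponentials of multigraphs with an isolated vertex, i.e.\ to $F_{\mathrm{iso}}^{(n)}(z)$ built from \emph{all} multigraphs on at most $n-1$ non-isolated vertices. The paper first invokes the argument of Proposition~\ref{prop.connected_generate}: the subalgebra generated by $S$ is already generated by the much smaller subset of invariants $\exps\g$ with $\g$ quasi-connected (a single non-trivial connected component) on fewer than $n$ non-isolated vertices, and it is to \emph{this} pruned set that Condition~\ref{cond.dimension} is applied. The pruning matters for the direction of the inequality: every disconnected multigraph you keep contributes an extra factor $(1-z^{d})^{-1}$, so $F_{\mathrm{iso}}^{(n)}$ dominates the paper's comparison series coefficient by coefficient, and a failure of dominance for your larger series is strictly harder to obtain. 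Since the numerical margin in the window $11\le n\le 18$ is delicate (the paper could not even push the computation past $n=18$), you cannot assume your unpruned test still detects the failure; you must either perform the same reduction to connected pieces first, or explicitly verify that your inflated series still falls short of $H(\Inv,z)$ in some degree. As a side remark, your claim that below $n=11$ ``one still expects the set to be non-generating'' runs against the paper's motivation: Pouzet's conjecture asserts that this set \emph{is} generating, and for small $n$ (e.g.\ $n=5$) the paper rather hoped to prove that it is.
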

\begin{proof}
  (i) For $n=5,6,7,8$, simple graphs can be counted with respect to
  the number of edges using Pólya enumeration~\cite{Harary_Palmer.GE}.
  The coefficient of degree $d=4$ of the series $S(d_1,\dots, d_t)$ is
  strictly smaller than that of the Hilbert series. Therefore,
  condition~\ref{cond.dimension} applies. For $n\ge9$, no new
  isomorphism types of multigraphs with less than $4$ edges appears,
  so the coefficient of degree $4$ of both series is the same as for
  $n=8$. Condition~\ref{cond.dimension} again applies.

  (ii) By an argument similar to the proof of
  proposition~\ref{prop.connected_generate}, we have only to consider
  the set of all invariants $\exps\g$, where $\g$ is a multigraph with
  a unique non-trivial connected component, which is of size $<n$.
  Those multigraphs can be counted from the total number of
  multigraphs by using a technique similar to that described
  in~\cite[\S~4.2, p.~90]{Harary_Palmer.GE}. For $11\le n\le18$,
  computations of both series shows that
  condition~\ref{cond.dimension} fails.
\end{proof}
We could not check (ii) for $n>18$ since the computation were
intractable. However, the results for $n\le 18$ strongly suggest that,
for $d\approx 4n-24$, the ratio between the coefficients of degree $d$
of the two series is bounded by an expression of the form
$\exp(-an)+0.17$. This could probably be confirmed by an asymptotic
study, and we conjecture that (ii) is true for any $n\ge11$.

\section{Decomposition of Hironaka}

\label{Hironaka}

The smallest degree bound $\beta(\Inv)$ and furthermore the polynomial
$\MGSGS(\Inv,z)$ contains important information about the invariant
ring, which would be very useful when the computation of a minimal
generating set is intractable. But so far, we don't know how to
calculate them except by explicitly computing such a minimal
generating set.

Invariant theory provides only some bounds on $\beta(\Inv)$ and
$\MGSGS(\Inv,z)$~\cite{Schmid.1991,Derksen_Kraft.1997}. Noether's
theorem~\cite[p.~27]{Sturmfels.AIT} yields: $\beta(\Inv)\le |\G|=n!$,
which is not very informative. A much better bound exists for
permutation groups: \mbox{$\beta(\Inv)\le \binom{m}{2}=\binom{\binom n
    2} 2$}~\cite{Garsia_Stanton.1984}. However, our computations for
small $n$ shows that this is still a rather loose bound. This section
introduces the tools that produce this bound, and possibly even better
bounds.

A set of $m$ homogeneous invariants $(\theta_1,\dots,\theta_m)$ of
$I(G)$ is called a \emph{homogeneous system of parameters} or, for
short, a \emph{system of parameters} if the invariant ring $I(G)$ is
finitely generated over its subring $\K[\theta_1,\dots,\theta_m]$.
That is, if there exist a finite number of invariants
$(\eta_1,\dots,\eta_t)$ such that the invariant ring is the sum of the
subspaces $\eta_i.\K[\theta_1,\dots,\theta_m]$. By Noether's
normalization lemma, there always exists a system of parameters for
$I(G)$. Moreover, $I(G)$ is \emph{Cohen-Macaulay}, which means that
$I(G)$ is a free-module over any system of parameters. So, if the set
$(\eta_1,\dots,\eta_t)$ is minimal for inclusion, $I(G)$ decomposes
into a direct sum:
\begin{displaymath}
  I(G)=\bigoplus_{i=1}^t \eta_i . \K[\theta_1,\dots,\theta_m].
\end{displaymath}
This decomposition is called a \emph{Hironaka decomposition} of the
invariant ring. The $\theta_i$ are called \emph{primary invariants},
and the $\eta_i$ \emph{secondary invariants} (in algebraic
combinatorics literature, the $\theta_i$ are some times called
\emph{quasi-generators} and the $\eta_i$
\emph{separators}~\cite{Garsia_Stanton.1984}). It should be emphasized
that primary and secondary invariants are not uniquely determined, and
that being a primary or secondary invariant is not an intrinsic
property of an invariant $p$, but rather express the role of $p$ in a
particular generating set.

The primary and secondary invariants together form a generating
set. From the degrees $(d_1,\dots,d_m)$ of the primary invariants
$(\theta_1,\dots,\theta_m)$ and the Hilbert series we can compute the
number $t$ and the degrees $(e_1,\dots,e_t)$ of the secondary
invariants $(\eta_1,\dots,\eta_t)$ by the formula:
\begin{equation}
  \label{eq:degres_secondaires}
  z^{e_1}+\dots+z^{e_t}=(1-z^{d_1})\cdots(1-z^{d_m}) H(I(G),z).
\end{equation}
Assuming $d_1\le\dots\le d_m$ and $e_1\le\dots\le e_t$, it can be
proved that:
\begin{equation}
  \label{eq.max_secondaires}
  \begin{gathered}
    t   = \frac{d_1\cdots d_m}{|G|},\\
    e_t = d_1+\dots+d_m - m - \mu,\\
    \beta(I(G))\le \max(d_m,e_t),
  \end{gathered}
\end{equation}
where $\mu$ is the smallest degree of a polynomial $p$ such that
$\sigma\cdot p=\det(\sigma) p$ for all $\sigma\in G$ \cite[Proposition
3.8]{Stanley.1979}.

For example, if $G$ is the symmetric group $\sym_m$, the $m$
elementary symmetric polynomial (or the $m$ first symmetric power sums)
form a system of parameters, $t=1$, $e_t=0$ and $\eta_1=1$. This is
consistent with the fundamental theorem of symmetric polynomials.

More generally, for $\G$ as well as for any permutation group, the
elementary symmetric polynomials still form a system of parameters.
This yields the following information on $\Inv$.
\begin{proposition}
  \label{prop.bounds_symmetric}
  For $n\ge4$, consider the system of parameters of $\Inv$ composed
  of the elementary symmetric polynomials, and let $(e_1,\dots,e_t)$
  be the degrees of secondary invariants. Then,
  \begin{gather*}
    t   = \frac{m!}{|\G|} = \frac{\binom{n}2!}{n!},\\
    e_t = \binom m 2 - \mu_n = \binom{\binom{n}2}2- \mu_n,\\
    \beta(\Inv)\le \binom{\binom n 2} 2-\mu_n,
  \end{gather*}
  where $\mu_n=0$ if $n$ is even, and $\mu_n=\lceil\frac34(n-1)\rceil$
  otherwise.
\end{proposition}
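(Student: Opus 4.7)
The approach is to apply the formulas (\ref{eq.max_secondaires}) to the system of parameters consisting of the $m=\binom n 2$ elementary symmetric polynomials in the variables $x_\ij$. Since $\G$ is a subgroup of the full symmetric group $\sym_m$ acting on the edge-variables, we have $I(\sym_m)\subset\Inv$, so these polynomials are invariants; they are algebraically independent by the fundamental theorem of symmetric functions, and their respective degrees are $d_i=i$ for $i=1,\dots,m$. Plugging $d_i=i$ into (\ref{eq.max_secondaires}) yields $t=d_1\cdots d_m/|\G|=m!/n!$ and, using $d_1+\cdots+d_m=\binom{m+1}2$, also $e_t=\binom{m+1}2-m-\mu_n=\binom m 2-\mu_n$. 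The bound $\beta(\Inv)\le\max(d_m,e_t)$ collapses to $\beta(\Inv)\le e_t$ as soon as $\binom m 2-\mu_n\ge m$, which holds for every $n\ge 4$.

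The substantive work is the computation of $\mu_n$, the smallest degree of a polynomial $p$ with $\sigma\cdot p=\det(\sigma)\,p$ for all $\sigma\in\G$, where $\det(\sigma)$ is the signature of $\sigma$ viewed as a permutation of the $m$ edge-variables. A vertex transposition $\tau=(i,j)$ fixes the variable $x_\ij$ and swaps $x_{\{i,k\}}$ with $x_{\{j,k\}}$ for each of the $n-2$ vertices $k\notin\{i,j\}$, so its edge-signature is $(-1)^{n-2}$. Since any two transpositions are conjugate in $\sym_n$ and a character of $\sym_n$ is determined by its value on a single transposition, the character $\det$ is trivial when $n$ is even---in which case $\mu_n=0$ is witnessed by the constant $p=1$---and coincides with the usual sign character $\sign$ when $n$ is odd.

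For odd $n$ I would then reformulate $\mu_n$ combinatorially through the monomial basis: every $\sign$-antisymmetric polynomial is a $\K$-linear combination of antisymmetrizations $\sum_{\sigma\in\sym_n}\sign(\sigma)\,\expm{\sigma\cdot\g}$ indexed by labelled multigraphs $\g$, and such an antisymmetrization vanishes identically exactly when $\g$ admits an odd automorphism. Hence $\mu_n$ equals the smallest number of edges, counted with multiplicity, of a multigraph on $n$ vertices whose automorphism group is contained in the alternating group $A_n$.

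The remaining and main obstacle is then the exact value $\mu_n=\lceil\tfrac34(n-1)\rceil$ for odd $n$. For the upper bound I would exhibit explicit families of multigraphs attaining the bound: paths, spiders whose arm-lengths produce only even reflections (e.g.\ a length-$3$ path for $n=5$ or a spider with two arms of length $2$ and one of length $1$ for $n=7$), odd cycles, and small weighted asymmetric gadgets, tuned so that the overall count meets $\lceil\tfrac34(n-1)\rceil$. The key parity facts driving these constructions are that swapping two isomorphic components on an odd number of vertices contributes an odd permutation, and that a multigraph can contain at most one isolated vertex before acquiring an odd transposition. The matching lower bound is the delicate part: I would argue by a case analysis on the number of isolated vertices, the number and vertex-sizes of the non-trivial components, and the parity of the induced reflections or rotations, systematically forcing an odd involution in the automorphism group of any multigraph with fewer edges than $\lceil\tfrac34(n-1)\rceil$.
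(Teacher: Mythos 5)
Your proposal is correct and follows essentially the same route as the paper: reduce everything to the value of $\mu_n$ via the formulas~(\ref{eq.max_secondaires}), compute the sign of an edge-permutation on a vertex transposition to get $(-1)^{n-2}$, reinterpret $\mu_n$ for odd $n$ as the minimal number of edges of a multigraph without odd automorphism, and settle that quantity by explicit constructions plus a minimality argument. Note that, like the paper (which builds its extremal multigraphs from copies of fixed gadgets $\g_4,\dots,\g_7$ and only asserts minimality ``by induction over $n$''), you leave the exact gadgets and the lower-bound case analysis as a sketch, so neither text fully discharges that final combinatorial step.
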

For example, $\beta(\Inv[4])\le15$, $\beta(\Inv[5])\le42$ and
$\beta(\Inv[6])\le104$.
\begin{proof}
  We only have to check the value of $\mu_n$.  When $n$ is even,
  $\det(\sigma)=1$ for all $\sigma\in\G$. Therefore, $p:=1$ verifies the
  condition $\sigma\cdot p=\det(\sigma) p$ for all $\sigma\in\G$. We
  note that this is generally true for any Gorenstein ring (see
  \S~\ref{Gorenstein} and \cite[\S~8]{Stanley.1979}).  When $n$ is
  odd, the sign of a permutation of the edges is the sign of the
  corresponding permutation of the vertices. Then, the smallest degree
  $\mu_n$ of a polynomial $p$ such that $\sigma\cdot p=\sign\sigma p$
  is the smallest number of edges of multigraph with no odd
  automorphism. The following lemma completes the proof.
\end{proof}

\begin{lemma}
  The smallest number of edges of a multigraph $\g_n$ on $n\ge4$
  vertices without odd automorphism is $\lceil\frac34(n-1)\rceil$.
\end{lemma}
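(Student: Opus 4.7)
The plan is to prove matching upper and lower bounds of $\lceil \tfrac 34(n-1) \rceil$. For the lower bound I would take $\g$ to be any multigraph on $n$ vertices whose automorphism group is contained in the alternating group $A_n$, and denote by $I$ and $L$ the sets of isolated vertices and leaves (degree-$1$ vertices). Three structural restrictions follow directly from the hypothesis: (i) $|I|\le 1$, for otherwise swapping two isolated vertices is an odd transposition; (ii) no connected component of $\g$ has exactly two vertices, since swapping these two vertices preserves any multi-edge between them and is still an odd transposition; and (iii) the assignment of each leaf to its unique neighbor is injective, since two leaves sharing a neighbor can be swapped. By~(ii) every leaf's neighbor lies in $V\setminus I\setminus L$, and then (iii) yields an injection $L\hookrightarrow V\setminus I\setminus L$, so $|L|\le (n-|I|)/2$. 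A degree count gives $2e\ge|L|+2(n-|I|-|L|)$, hence
\[
  e \;\ge\; (n-|I|) - \tfrac{|L|}{2} \;\ge\; \tfrac{3}{4}(n-|I|) \;\ge\; \tfrac{3}{4}(n-1),
\]
and integrality yields $e\ge\lceil\tfrac34(n-1)\rceil$.

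For the upper bound I would exhibit an explicit multigraph of the required size, built from three components each of whose nontrivial automorphism is a product of two transpositions and hence even: the path $P_4$ on four vertices with reversal $(1\,4)(2\,3)$ and $3$ edges; the path $P_5$ on five vertices with reversal $(1\,5)(2\,4)$ and $4$ edges; and the tree $T$ on six vertices with edges $\{12,23,34,45,36\}$, whose unique nontrivial automorphism $(1\,5)(2\,4)$ swaps the two length-$2$ branches at the degree-$3$ vertex and uses $5$ edges. Writing $n=4k+r$ with $r\in\{0,1,2,3\}$, I take respectively $k$ copies of $P_4$; $k$ copies of $P_4$ plus one isolated vertex; one $P_5$, $k-1$ copies of $P_4$, and one isolated vertex; or one $T$, $k-1$ copies of $P_4$, and one isolated vertex. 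A direct count shows the edge total equals $\lceil\tfrac34(n-1)\rceil$ in each case. The automorphism group factors as $\prod_C \Aut(C)\wr \sym_{k_C}$ over component isomorphism types $C$: each generator of $\Aut(C)$ is even by construction, and permuting $k_C$ copies of size $s_C$ acts on vertices as $s_C$ parallel cycles, contributing $(c-1)s_C$ transpositions per $c$-cycle of the component permutation. Since only $P_4$ components can be permuted nontrivially in our constructions ($s_C=4$), every such contribution is even, and the full automorphism group lies in $A_n$.

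The lower bound is clean, so the main obstacle is the upper bound case analysis, particularly $n\equiv 3\pmod 4$: paths alone do not suffice, because for instance the path on $n-1\equiv 2\pmod 4$ vertices has reversal equal to a product of $(n-1)/2$ transpositions, which is odd. The tree $T$ is introduced precisely for this residue class, providing a six-vertex, five-edge piece whose nontrivial automorphism is forced (by the unique degree-$3$ vertex and the two equivalent length-$2$ branches) to be an even double transposition, and which composes cleanly with the $P_4$ blocks for every $n\equiv 3\pmod 4$ with $n\ge 7$.
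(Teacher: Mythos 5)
Your proof is correct. The upper-bound construction is essentially identical to the paper's: decoding the paper's figures, its $\g_4$ is the path on four vertices, $\g_5$ is that path plus an isolated vertex, $\g_6$ is the path on five vertices plus an isolated vertex, and $\g_7$ is exactly your tree $T$ (a degree-$3$ vertex with two length-$2$ branches and one pendant edge) plus an isolated vertex; the assembly by residue of $n$ modulo $4$ and the parity check on the wreath-product automorphism group match as well. The real difference is the lower bound: the paper dismisses it with one sentence (``can be proved by induction over $n$'') and gives no argument, whereas you supply a complete, non-inductive proof via the three structural constraints (at most one isolated vertex, no two-vertex component, injectivity of the leaf-to-neighbour map) and a degree count giving $2e\ge 2(n-|I|)-|L|$ with $|L|\le(n-|I|)/2$. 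This direct counting argument is cleaner than an induction would likely be, is valid for multigraphs (where leaves must be read as vertices of weighted degree $1$, as you implicitly do), and is the part of the lemma the paper leaves unproved; in that sense your write-up is more complete than the original.
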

\begin{proof}
  Such multigraphs can be constructed for any $n$ as follows:
  \begin{itemize}
  \item $\g_4:=\graph{autogen/ugraph-101100}$;
    $\g_5:=\graph{autogen/ugraph-1011000000}$;
    $\g_6:=\graph{autogen/ugraph-101000000010001}$;
    $\g_7:=\graph{autogen/ugraph-101100000010001000000}$;
  \item $\g_{4k}$ is composed of $k$ copies of $\g_4$ ($3k$ edges);
  \item $\g_{4k+1}$ is composed of $k$ copies of $\g_4$ and an isolated
    vertex ($3k$ edges);
  \item $\g_{4k+2}$ is composed of $k-1$ copies of $\g_4$ and one copy of
    $\g_6$ ($3k+1$ edges);
  \item $\g_{4k+3}$ is composed of $k-1$ copies of $\g_4$ and one copy
    of $\g_7$ ($3k+2$ edges).
  \end{itemize}
  The minimality of the number of edges of such multigraphs can be
  proved by induction over $n$.
\end{proof}

So, the knowledge of a system of parameters and of the Hilbert series
provides both an upper bound on $\beta(\Inv)$, as well as bounds on
the coefficients of $\MGSGS(\Inv,z)$. Unfortunately, our experience
has shown that generating sets composed of primary and secondary
invariants are far from minimal (see Figures~\ref{fig.secondaries.low}
and~\ref{fig.gen}), so those bounds are quite loose. Moreover, to our
knowledge, those bounds are the only obtainable information about a
minimal generating set, without actually computing it.

\section{Low degrees systems of parameters}

\label{parameters}

We now search for a low degrees system of parameters for $\Inv$, in
order to improve the bound on $\beta(\Inv)$.
Equation~(\ref{eq:degres_secondaires}) can guide our quest by suggesting
possible degrees. Indeed, there can only exist a system of parameters
of degrees $(d_1,\dots,d_m)$ if the expression
$(1-z^{d_1})\cdots(1-z^{d_m}) H(I(G),z)$ is a polynomial with positive
integer coefficients. It has even been conjectured by Mallows and
Sloane~\cite{Mallows_Sloane.1973,Dixmier.1991} that the converse is
true: if $(1-z^{d_1})\cdots(1-z^{d_m}) H(I(G),z)$ is a polynomial with
positive integer coefficient, then there exists a system of parameters
of degrees $(d_1,\dots,d_m)$. A counter-example has been found, but
the conjecture still holds if the representation of $G$ over $V$ is
irreducible, or when using a multigraded Hilbert series (one grading
for each irreducible component)~\cite[p.~5]{Dixmier.1991}.

By tweaking the Hilbert series for $n\le21$, and the multigraded
Hilbert series for $n\le15$, we find that the degree sequence
$(1,\dots,n,\ 2,\dots,\binom{n-1}{2})$ always produces a polynomial
with positive integer coefficient. We also proved that, for any $n$,
this degree sequence produces a polynomial. We are therefore somewhat
confident with the following conjecture:
\begin{conjecture}
  \label{conj.degrees_parameters}
  For all $n\ge3$, there exists a system of parameters for $\Inv$ of
  degrees $(1,\dots,n,\ 2,\dots,\binom{n-1}{2})$. As a direct
  consequence, $\beta(\Inv)\le \binom n
  2+\binom{\binom{n-1}{2}}{2}-\mu_n$.
\end{conjecture}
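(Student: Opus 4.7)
The plan is to construct the system of parameters explicitly, exploiting the decomposition $[n]\oplus[n-1,1]\oplus[n-2,2]$ of the representation $\G$ on $\V$ recalled in \S\ref{valuated_graphs}. The subrepresentation $[n]\oplus[n-1,1]$ is the permutation representation on the stars $\E_1,\dots,\E_n$, whose ring of invariants is $\K[X_1,\dots,X_n]^{\sym_n}$, where $X_i=\sum_{j\neq i}x_\ij$ is the vertex-degree form. I would take the $n$ elementary symmetric polynomials $e_1(X_1,\dots,X_n),\dots,e_n(X_1,\dots,X_n)$ as parameters of degrees $1,2,\dots,n$. For the remaining $\binom{n-1}{2}-1$ parameters, I would work on the $0$-regular component $[n-2,2]$, whose dimension is precisely $\binom{n-1}{2}-1$: natural candidates are suitably chosen power sums or trace-type invariants of degrees $2,3,\dots,\binom{n-1}{2}$, built from the projections of the $x_\ij$ onto $[n-2,2]$.

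To verify that this family $(\theta_1,\dots,\theta_m)$ forms a system of parameters, the direct route is to show that their common zero locus in $\V$ reduces to $\{0\}$. The first $n$ elementary symmetric polynomials in the $X_i$ vanish simultaneously exactly on $0$-regular graphs, so the problem reduces to proving that the remaining $\binom{n-1}{2}-1$ invariants, viewed as polynomial functions on the irreducible subrepresentation $[n-2,2]$, have $0$ as their only common zero there. This is precisely the nature of the Gr\"obner-basis check carried out for $n\le 5$; once the parameter property is established, the degree bound in the statement of the conjecture follows immediately from~(\ref{eq.max_secondaires}).

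A more conceptual route combines the Mallows--Sloane implication recalled in the excerpt---which holds for irreducible representations, or in the multigraded setting---with positivity of $(1-z^{d_1})\cdots(1-z^{d_m})H(\Inv,z)$. The author has already shown that this product is a polynomial for every $n$; what is missing is non-negativity of its coefficients uniformly in $n$. A plausible attack is through the P\'olya-type expression of $H(\Inv,z)$ as a sum over partitions of $n$, combined with a combinatorial interpretation of the quotient in terms of weighted multigraphs whose vertex-degree profile is constrained by the first $n$ parameters.

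The main obstacle is the passage from finite computer verification to a uniform argument in $n$. Whether one pursues the direct geometric approach or the Mallows--Sloane approach, one must control a combinatorial object---either the common zero locus of explicit invariants on an irreducible representation, or the coefficients of a product of rational functions---whose complexity grows rapidly with $n$. Since the Gr\"obner-basis route is already intractable at $n=6$, any successful proof will likely have to exploit representation-theoretic or symmetric-function identities specific to the $[n-2,2]$ component and its relationship to the action of $\sym_n$ on pairs.
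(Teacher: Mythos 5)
Be aware that the statement you were asked about is a \emph{conjecture}: the paper does not prove it, and neither do you --- your text is a plan plus an honest acknowledgement that the uniform-in-$n$ step is missing. That said, your plan is essentially the paper's own: the author also splits off the star component, takes the first $n$ power sums in the $X_i$ (equivalent in characteristic zero to your elementary symmetric polynomials), completes with power sums of degrees $1,\dots,\binom{n-1}{2}$ in the edge variables $x_\ij$ (which, once the $X_i$-parameters force $0$-regularity, play exactly the role you assign to invariants on $[n-2,2]$), and verifies the zero-locus criterion of Characterization~\ref{carac.parameters} by Gr\"obner bases for $n\le5$; this is Conjecture~\ref{conj.primaries} and the discussion following it. One caution on your ``more conceptual route'': the Mallows--Sloane implication is itself only a conjecture --- the paper says merely that no counterexample is known in the irreducible or multigraded settings, not that it is a theorem there --- so even a proof of coefficient non-negativity of $(1-z^{d_1})\cdots(1-z^{d_m})H(\Inv,z)$ for all $n$ would leave the statement conditional on another open problem.
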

For example, $\beta(\Inv[4])\le9$, $\beta(\Inv[5])\le22$ and
$\beta(\Inv[6])\le60$, which are much smaller degree bounds than those
provided by proposition~\ref{prop.bounds_symmetric}.
Figure~\ref{fig.secondaries} displays the number of secondary
invariants depending on the system of parameters.
\begin{figure}
  \begin{center}
    \subfigure[System of parameters: symmetric power sums]{
      \includegraphics[scale=0.5]{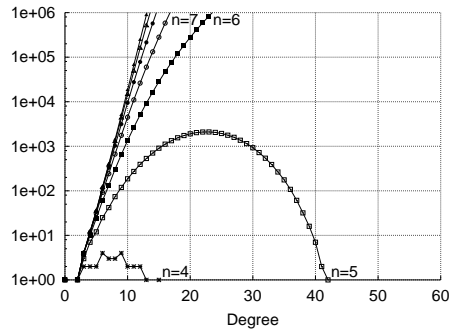}}
    \label{fig.secondaries.high}
    \subfigure[Conjectured system of parameters]{
      \includegraphics[scale=0.5]{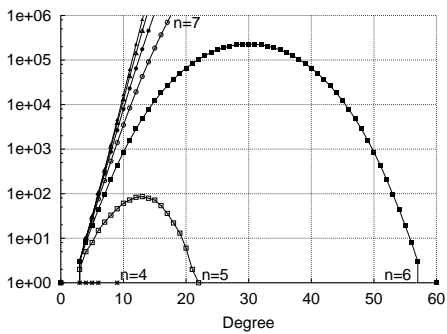}}
    \label{fig.secondaries.low}
  \end{center}
  \caption{Number of secondaries per degree}
  \label{fig.secondaries}
\end{figure}

Next, we construct a reasonable system of parameters and check its
validity for $n=3,4,5$, which proves
conjecture~\ref{conj.degrees_parameters} for those values. We note
that Dixmier~\cite{Dixmier.1991} constructed a system of parameters
with degrees $(2,3,4,5,6)$ for the representation $[3,2]$ of $\sym_5$,
and proved its validity by hand. By using the decomposition of the
representation $\G[5]$ into $[5]+[4,1]+[3,2]$, this also provides a
system of parameters with the expected degrees.

The form of the degree sequence suggests starting from the $\binom n
2$ first symmetric power sums, and replacing the last $n-1$ degrees
($\binom{n-1}{2}+1,\dots \binom n 2$) by some invariants of degree
$2,\dots,n$. Moreover, since the representation splits into
$[n]\oplus[n-1,1]$ and $[n-2,2]$ (see \S~\ref{valuated_graphs}), we
get a system of parameters for the invariant ring of the whole
representation, by taking systems of parameters for the invariant
rings of each components and putting them together. Recall that the
first component is the natural representation of $\sym_n$ by
permutations of the stars $(\E_1,\dots,\E_n)$. So, the invariant ring
over this component is the ring of symmetric polynomials in the dual
variables $(X_1,\dots,X_n)$, and the $n$ first symmetric power sums in
the $X_i$ form a system of parameters for this component. Note that,
up to a constant $2$, the first symmetric power sum in the $X_i$ is
equal to the first symmetric power sum in the $x_\ij$. All this leads
to the following conjecture:
\begin{conjecture}
  \label{conj.primaries}
  If $n\ge3$, the following system of invariants is a system of
  parameters for $\Inv$.
  \begin{gather*}
    x_{\{1,2\}}+\dots+x_{\{n-1,n\}}, \quad \dots, \quad
    x_{\{1,2\}}^{\binom{n-1}{2}}+\dots+x_{\{n-1,n\}}^{\binom{n-1}{2}},\\
    X_1+\dots+X_n,\quad\dots,\quad X_1^n+\dots+X_n^n.
  \end{gather*}
\end{conjecture}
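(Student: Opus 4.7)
The conjecture asserts that the listed power sums form a system of parameters for $\Inv$, equivalently (over $\overline{\K}$) that their common zero locus in $\V$ is $\{0\}$. Write $p_k := \sum_\ij x_\ij^k$ and $P_k := \sum_i X_i^k$. I first note that $P_1 = \sum_i X_i = 2 \sum_\ij x_\ij = 2 p_1$, so the written list is redundant by one element; after dropping $p_1$, the candidate system has $n + \binom{n-1}{2} - 1 = \binom{n}{2} = m$ homogeneous invariants, the correct count for a system of parameters.

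The plan is to exploit the $\G$-invariant decomposition $\V = \V[s] \oplus \V[r]$, where $\V[s]$ carries $[n] \oplus [n-1,1]$ and is spanned by the stars $\E_1, \dots, \E_n$, and $\V[r]$ carries $[n-2,2]$ and is the $0$-regular subspace. Since the $X_i$ are linear forms vanishing exactly on $\V[r]$, Newton's identities (applied to $X_1, \dots, X_n$ in characteristic zero) give that $P_1(\g) = \dots = P_n(\g) = 0$ if and only if $X_1(\g) = \dots = X_n(\g) = 0$, i.e.\ $\g \in \V[r] \otimes \overline{\K}$. The conjecture is therefore equivalent to the claim that the restrictions of $p_2, p_3, \dots, p_{\binom{n-1}{2}}$ to $\V[r]$ cut out only the origin. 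The dimensions match: $\dim \V[r] = \binom{n}{2} - n = \tfrac{n(n-3)}{2}$ equals the number $\binom{n-1}{2} - 1 = \tfrac{n(n-3)}{2}$ of restricted power sums.

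For $n \le 5$, the verification would proceed by explicit computation. One fixes a basis of $\V[r]$, expresses each $p_k|_{\V[r]}$ as a polynomial in those coordinates, and computes a Gröbner basis of the resulting ideal to check that the quotient is finite-dimensional. For $n = 4$, the subspace $\V[r]$ is two-dimensional and admits the parametrization $g_{\{1,2\}} = g_{\{3,4\}} = a$, $g_{\{1,3\}} = g_{\{2,4\}} = b$, $g_{\{1,4\}} = g_{\{2,3\}} = -a-b$; the restrictions are $p_2|_{\V[r]} = 4(a^2 + ab + b^2)$ and $p_3|_{\V[r]} = -6 ab (a + b)$, whose only common zero is $(0,0)$. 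For $n = 5$, the space is five-dimensional and a Gröbner basis computation with $\fgb$ or CoCoA handles the problem directly.

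The main obstacle is the general case. For $n \ge 6$ the Gröbner basis approach becomes computationally intractable, and I see no obvious structural shortcut: the $[n-2,2]$ action on $\V[r]$ is irreducible, blocking further decomposition, and the power sums do not restrict to $\V[r]$ in a way that recurses cleanly on $n$. A uniform proof would probably require either a Jacobian-nonvanishing argument along a well-chosen generic curve in $\V[r]$, or a structural representation-theoretic input specific to $[n-2,2]$; the positivity of coefficients of $(1-z^{d_1}) \cdots (1-z^{d_m}) H(\Inv,z)$ discussed above yields only the necessary Mallows--Sloane condition, not its (still open) converse.
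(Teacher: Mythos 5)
Your proposal matches the paper's own treatment of this statement: it is only a conjecture there, established for $3\le n\le 5$ by combining Characterization~\ref{carac.parameters} (the common zero locus must be the origin) with the splitting of $\G$ into the star component $[n]\oplus[n-1,1]$ and the $0$-regular component $[n-2,2]$ — a hand verification for $n=4$ and, for $n=5$, a Gröbner basis computation made feasible by a linear change of basis respecting that decomposition. Your explicit reduction via Newton's identities to the common zeros of $p_2,\dots,p_{\binom{n-1}{2}}$ restricted to the $0$-regular subspace is precisely that change of basis made concrete (and your $n=4$ parametrization checks out), while your acknowledgment that $n\ge6$ remains out of reach is exactly the paper's situation as well.
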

\begin{proposition}
  Conjecture~\ref{conj.primaries} holds for $3\le n \le5$.
\end{proposition}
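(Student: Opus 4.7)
The plan is to reduce the proposition, for each $n\in\{3,4,5\}$, to a zero-dimensionality test on an explicit ideal, and to settle that test by a Gröbner basis computation. The standard criterion, using that $\Pol$ is Cohen--Macaulay, is the following: $m=\binom n 2$ homogeneous invariants $(\theta_1,\dots,\theta_m)$ form a system of parameters for $\Inv$ if and only if they form a regular sequence in $\Pol$, equivalently their common zero locus in $\V$ (over $\bar\K$) is $\{0\}$, equivalently $\dim_\K\Pol/(\theta_1,\dots,\theta_m)<\infty$. The last condition is certified by any Gröbner basis whose initial ideal contains a pure power of each variable~$x_\ij$.

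Before any computation I would exploit the splitting $\V=[n]\oplus[n-1,1]\oplus[n-2,2]$ from Section~\ref{valuated_graphs}. The first $n$ star power sums $X_1^k+\dots+X_n^k$, $1\le k\le n$, are exactly the first $n$ symmetric power sums in the variables $X_i$; by Newton's identities they force $X_1=\dots=X_n=0$ on the common zero locus, i.e.\ they cut out precisely the zero-regular subspace $[n-2,2]$, of dimension $n(n-3)/2$. The verification therefore reduces to showing that the restrictions to $[n-2,2]$ of the $x_\ij$-power sums of degrees $2,\dots,\binom{n-1}{2}$ have the origin as their only common zero. This drops the ambient dimension of the critical Gröbner basis from $\binom n 2$ to $n(n-3)/2$: from $3$ to $0$ for $n=3$, from $6$ to $2$ for $n=4$, and from $10$ to $5$ for $n=5$.

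For $n=3$ the zero-regular subspace is already $\{0\}$, so nothing remains to check. For $n=4$ and $n=5$, I would parametrise $[n-2,2]$ by eliminating the $n$ linear relations $X_i=0$, restrict the power sums of degrees $2,\dots,\binom{n-1}{2}$ to the resulting $n(n-3)/2$-dimensional coordinate system, and run a degree reverse lexicographic Gröbner basis (via \fgb, or the permutation-group routines of \permuvar) on the reduced ideal --- respectively $2$ power sums of degrees $2,3$ in $2$ variables for $n=4$, and $5$ power sums of degrees $2,3,4,5,6$ in $5$ variables for $n=5$ --- then read pure powers of each remaining variable off the initial ideal. As an independent sanity check, the Hilbert series of the resulting Artinian quotient must coincide with $(1-z^{d_1})\cdots(1-z^{d_m})\,H(\Inv,z)$ by formula~(\ref{eq:degres_secondaires}), which tightly constrains the expected output.

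The main obstacle is purely computational. For $n=5$ the critical Gröbner basis involves five power sums of degrees up to $6$ in five variables, which is non-trivial but tractable with a modern implementation; without the representation-theoretic reduction, the raw ten-variable, ten-generator version would already be quite delicate. For $n\ge 6$ the same strategy becomes intractable, which is exactly why Conjecture~\ref{conj.primaries} must remain open beyond $n=5$.
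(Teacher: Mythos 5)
Your proposal is correct and follows essentially the same route as the paper: the zero-locus criterion of Characterization~\ref{carac.parameters}, certified by a Gröbner basis exhibiting a pure power of each variable in the initial ideal, with the $n=5$ computation made tractable by exploiting the splitting $[n]\oplus[n-1,1]\oplus[n-2,2]$. Your explicit reduction via Newton's identities --- the $X$-power sums of degrees $1,\dots,n$ force $X_1=\dots=X_n=0$, leaving only the restrictions of the edge power sums of degrees $2,\dots,\binom{n-1}{2}$ to the $n(n-3)/2$-dimensional zero-regular component --- is just a spelled-out version of the ``suitable linear change of basis which respects the decomposition of the representation'' that the paper invokes.
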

This is immediate for $n=3$, since the component \mbox{$[n-2,2]$} is
trivial. To test the conjecture for other small cases, we used the
following general characterization:
\begin{caracterisation}[\cite{Sturmfels.AIT}]
  \label{carac.parameters}
  A set of $m$ homogeneous invariants $(\theta_1,\dots,\theta_m)$ is a
  system of parameters if and only if $\v=0$ is the only common zero
  of the $\theta_i$:
  \begin{displaymath}
    \theta_1(\v)=\dots=\theta_m(\v)=0 \Rightarrow \v=0
  \end{displaymath}
\end{caracterisation}
For $n=4$, this characterization is enough to prove
conjecture~\ref{conj.primaries} by hand.

For $n\ge5$, we can try to check conjecture~\ref{conj.primaries} as
follow: compute a Gröbner basis for the $\theta_i$, and verify that
for each variable $x_\ij$ there is a polynomial in the Gröbner basis
whose leading term is of the form $x_\ij^k$
(see~\cite[Subroutine~2.5.2]{Sturmfels.AIT}; this is both a necessary
and sufficient condition for the radical of the ideal generated by the
$\theta_i$ to be the irrelevant ideal).

For $n=3,4$, the direct computation of this Gröbner basis takes less
than one second, but for $n=5$ it seems to be intractable and fails.
However, some equivalent Gröbner basis can be computed in about one
minute, by using a suitable linear change of basis which respects the
decomposition of the representation. The verification of
characterization~\ref{carac.parameters} is then straightforward.

For $n=6$, even with the same linear change of basis and using
\fgb~\cite{Faugere.1999}, the computation is intractable.  In fact,
the growth of the Gröbner basis seems to follow nearly the worst
possible theoretical case~\cite{Garsia_Wallach.1999}.

Finally, as a by-product of the computation of minimal generating sets
with \permuvar, we checked that for $n\le8$ and for the low-degree
homogeneous components, the ring of invariants is indeed a free-module
over $\K[\theta_1,\dots,\theta_m]$. This gives some confidence in
conjecture~\ref{conj.primaries}, but so far we are unable to prove it.


If the above construction of a system of parameters is correct, we
believe it to be nearly optimal: there exist no general construction
of systems of parameters with lower degrees. Indeed, we calculated,
for small values of $n$, the smallest degree sequence allowed by the
multigraded Hilbert series. For $3,4,5$, the degrees conjectured above
are optimal. For $n\ge6$, it was usually possible to divide some of
the degrees by $2$ or $3$. For example, for $n=6$ and $7$ the best
degree sequences are respectively $(1,\dots,6,\ 
2,\dots,7,\frac82,\frac93,\frac{10}2)$ and $(1,\dots,7,\ 
2,\dots,7,\frac82,9,\dots,13,\frac{14}2,15)$. We have not noticed any
regularity in these case by case optimizations. Therefore, we don't
think this technique can be refined much further in order to get
better degree bounds.

\section{Computing minimal generating sets}

\label{MGS}

Since we can not get more \apriori information on homogeneous minimal
generating sets of $\Inv$, we proceed with explicitly computing them.
The computations are very intensive (even for $n=5$) but give some
feeling of the size and degree of minimal generating sets. The bounds
given by Hironaka decompositions seem very loose.

The basic principle of the classical algorithms is to construct
generating sets degree by degree, from $1$ up to the best degree bound
known. Since the complexity of the computations involved usually
increases quickly with increasing degree, the quality of the degree
bound is crucial. One can take advantage of the existence of a
Hironaka decomposition by computing secondary invariants and, while
doing so, selecting the secondary invariants that are
\emph{irreducible} (\ie that cannot be expressed as products of lower
degree secondary invariants). The irreducible secondary invariants
together with the primary invariants form a minimal generating set
(some primary invariants may need to be removed).

Most software~\cite{Kemper.Invar} relies on a precomputation of a
Gröbner basis of the system of parameters to greatly speed up the
rest of the computations. However, with $\Inv$ this precomputation is
very hard, if not impossible (see \S~\ref{parameters}). Software that
do not rely on this precomputation uses linear algebra on the
homogeneous component of degree $d$ of the whole ring of polynomials,
whose dimension grows quickly with increasing $d$, and fail early.

Since our group is a permutation group, invariants can be stored as
linear combinations of invariants $\exps\g$. This saves a lot of
memory (up to a factor of $1/|\G|$ for monomials without symmetries,
which happens to be the case for most of them). This data structure
also allows for the same linear algebra operations inside the
homogeneous component of degree $d$ of the invariant ring which is
considerably smaller. We therefore implemented our own invariant
theory software \permuvar which takes advantage of the particular
properties of permutation groups~\cite{Thiery.P.2001}. We chose the
computer algebra system \mupad, which is freely available (but alas
not open source software), and allows modularity through object
oriented programming. Moreover, \mupad's dynamic modules will allow
for rewriting critical sections in a very efficient language like
\texttt{C++}.

A sketch of the algorithm follows. We denote by
$\langle\theta_1,\dots,\theta_m\rangle_d$ the homogeneous component of
degree $d$ of the ideal generated by $(\theta_1,\dots,\theta_m)$ in $I(G)$.
\begin{algorithm}[Computing secondary invariants]
  \label{algo.sec}
  \textbf{Input}: a system of parameters $(\theta_1,\dots,\theta_m)$
  and a function $\texttt{nextInvariant}(d)$ which iterates through a
  set of invariants of degree $d$ spanning $I(G)_d$ as a
  vector space.\\
  \textbf{Output}: (irreducible) secondary invariants.

\begin{algorithmic}
  \newcommand{\nextInvariant}{\texttt{nextInvariant}\xspace}
  \renewcommand{\d}{\texttt{d}}
  \newcommand{\p}{\texttt{p}\xspace}
  \renewcommand{\L}{\texttt{L}\xspace}
  \renewcommand{\sec}{\texttt{secondaries}\xspace}
  \newcommand{\irred}{\texttt{irreducibles}\xspace}
\FOR{\d\ from $1$ to $e_t$} {
  \STATE{// Compute secondary invariants for degree \d}
  \STATE{\sec[\d]:=[\ ]; // Secondary invariants}
  \STATE{\irred[\d]:=[\ ];  // Irreducible secondaries}
  \STATE{// Compute a basis $\L$ of $\langle\theta_1,\dots,\theta_m\rangle_d$}
  \STATE{\L:=[\ ];}
  \FOR{\p product of a previous secondary and a
    non-trivial product of the $\theta_i$}{
    \STATE{insert \p into \L;}
    }
  \ENDFOR

  \STATE{// Extend $\L$ to a basis of $\K[I(G)_{<d}]_d$}

  \FOR{\p product of previous secondaries} {
    \IF{\p is not in the vector space spanned by \L} {
      \STATE{insert \p into \sec[\d];}
      \STATE{insert \p into \L;}
      }
    \ENDIF
    }
  \ENDFOR

  \STATE{// Construct the irreducible secondaries}
  \WHILE{\p:=\nextInvariant(d)} {
    \IF{\p is not in the vector space spanned by \L} {
      \STATE{insert \p into \sec[\d]}
      \STATE{insert \p into \irred[\d]}
      \STATE{insert \p into \L}
      }
    \ENDIF
    }
  \ENDWHILE
  }
\ENDFOR
\end{algorithmic}
\end{algorithm}
Some comments about this algorithm are in order:

(i) In the last loop, the Hilbert series provides a stopping
condition, since the number of secondary invariants is known. To
maintain efficiency, the elements of $L$ are mutually reduced by Gauss
elimination. Testing if $p$ is in the vector space generated by $L$
amounts to reducing it modulo $L$; inserting it into $L$ amounts to
further reducing the elements of $L$ by $p$. Therefore, this algorithm
is essentially a step by step matrix inversion by Gauss elimination,
and the cost for each degree is about $(\dim I(G)_d^3$.

(ii) The main waste of memory and time in this algorithm is the
explicit computation of the vector space basis $\texttt{L}$ of
$\langle\theta_1,\dots,\theta_m\rangle_d$. It would be nice to work
directly in the quotient of $I(G)$ by the ideal
$\langle\theta_1,\dots,\theta_m\rangle$, as in the algorithm based on
a Gröbner basis precomputation. This approach is further developed
in~\cite{Thiery.CMGS.2001}.

(iii) By properly keeping track of the reductions in \texttt{L}, we
can determine which primary invariants should be removed in order to
obtain a minimal generating set. In addition, by properly choosing the
$\texttt{nextInvariant}$ function, we can check whether a given set of
invariants is generating, and if not construct counter-examples.

For $n=5$, we could only compute a partial minimal generating set
$S_5$ up to degree $10$, whereas the best a priori degree bound is
$\beta(\Inv)\le22$. However, $s_{10}(\Inv)=0$ (\ie a minimal
generating set contains no invariant of degree $10$), and
Figure~\ref{fig.gen} strongly suggested that $s_d(\Inv)=0$ for any
$d\ge10$. This has been checked by Kemper~\cite{Kemper.IG5}, using
\emph{ad hoc} computations, thus proving that $S_5$ is a minimal
generating set (see \S~\ref{unimodality} for a possible alternative
approach). Therefore, $\beta(\Inv[4])=5$, and $\beta(\Inv[5])=9$.
\begin{conjecture}
  If $n\ge4$, $\beta(\Inv)=\binom n 2 -1$.
\end{conjecture}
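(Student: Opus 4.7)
The conjecture reduces to two inequalities, the lower bound $\beta(\Inv)\ge\binom n 2-1$ and the upper bound $\beta(\Inv)\le\binom n 2-1$. For $n\in\{4,5\}$ both are settled by the explicit computations reported in \S~\ref{MGS}, so the plan is to restrict attention to $n\ge6$ and to handle the two directions by rather different techniques.

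For the lower bound, the natural witness is the simple graph $\g_n:=K_n\setminus e$, the complete graph on $n$ vertices with one edge removed: it is connected, has exactly $\binom n 2-1$ edges, and therefore appears as a quasi-connected multigraph in the basis of Proposition~\ref{prop.connected_generate}(i). I would show that $\exps{\g_n}\notin\K[\Inv_{<\binom n 2-1}]_{\binom n 2-1}$; by part~(ii) of the lemma at the end of \S~1.4 this forces $\exps{\g_n}$ into every minimal homogeneous generating set and delivers the bound. Using the combinatorial description of products in \S~\ref{invariants_permutation}(ii), a monomial $\expm{\g_n}$ can appear in a product $\exps{\h_1}\cdots\exps{\h_k}$ of lower-degree invariants only when $\g_n$ decomposes as an edge-disjoint union of labelled representatives of the $\h_i$, since any superposition creating a multi-edge is excluded by $\g_n$ being simple. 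Enumerating these decompositions and propagating the automorphism factors coming from $\exps{\h}=|\Aut(\h)|\,\h^{*}$ should produce a linear system on the putative coefficients of $\exps{\g_n}$ that is visibly infeasible. This is in the same spirit as the Hilbert series argument of Theorem~\ref{th.not_gen}, but specialised to a single degree.

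For the upper bound, Proposition~\ref{prop.connected_generate}(ii) reduces the task to showing that for every quasi-connected multigraph $\g$ with $|E(\g)|\ge\binom n 2$ the exponential $\exps{\g}$ already lies in the subalgebra generated by invariants of degree $\le\binom n 2-1$. A first reduction is to assume $\g$ connected. The boundary case $\g=K_n$ gives $\exps{K_n}=\prod_{i<j}x_\ij$, a single monomial that I would try to produce from $\exps{K_n\setminus e}$, a single-edge invariant, and elementary symmetric polynomials in the $x_\ij$. For a general connected multigraph of degree at least $\binom n 2$, some edge carries multiplicity $\ge 2$; the plan is to peel it off, writing $\exps{\g}$ as $\exps{\g'}$ times a single-edge invariant minus lower-degree correction terms of the kind produced by the triangular inversion in the proof of Proposition~\ref{prop.connected_generate}, and closing by induction on total degree.

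The hard part is the upper bound, and the difficulty is structural rather than technical. A quasi-connected multigraph on $n$ vertices can carry arbitrarily many edges, so the peeling induction needs a genuine reduction for \emph{every} connected multigraph with exactly $\binom n 2$ edges, not just for $K_n$. None of the tools developed in the paper is strong enough for this: the computational approach of \S~\ref{MGS} is already intractable at $n=6$; the Hironaka bound of Proposition~\ref{prop.bounds_symmetric} is dramatically loose; the conjectured low-degree system of parameters of Conjecture~\ref{conj.primaries} is itself unproven beyond $n=5$; and the chain product of \S~\ref{chain_product} does not isolate the critical degree $\binom n 2-1$. A genuine proof will probably require either a new combinatorial straightening rule on the basis of Proposition~\ref{prop.connected_generate}(i), or a finer use of the Gorenstein symmetry from \S~\ref{Gorenstein} for $n$ even together with a separate argument for $n$ odd.
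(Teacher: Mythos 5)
The first thing to say is that the paper offers no proof of this statement: it is a conjecture, supported only by the computations $\beta(\Inv[4])=5$ and $\beta(\Inv[5])=9$ reported in \S~\ref{MGS}, and the paper's best \emph{proven} bounds ($\beta(\Inv)\ge\lfloor n/2\rfloor$ from \S~\ref{infinity}, $\beta(\Inv)\le\binom{\binom n2}2-\mu_n$ from Proposition~\ref{prop.bounds_symmetric}) are very far from $\binom n2-1$ on both sides. So there is nothing to match your argument against; the only question is whether your plan closes the gap, and it does not — in either direction.

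For the lower bound, $K_n\setminus e$ is a sensible candidate witness (it is consistent with the $n=4$ data, where the unique degree-$5$ generator is $K_4$ minus an edge), but the argument you sketch has no teeth. Because $K_n\setminus e$ is a dense simple graph, it admits very many edge-disjoint decompositions, so a great many products $\exps{\h_1}\cdots\exps{\h_k}$ of lower-degree invariants contain the monomial $\expm{\g_n}$ with nonzero coefficient; there is no single-coefficient obstruction, and what you actually need is that $\exps{\g_n}$ avoids the span of all such products inside $\Inv[n,\binom n2-1]$ — precisely the linear algebra that \S~\ref{MGS} reports as intractable for $n\ge6$. A counting argument in the style of Condition~\ref{cond.dimension} is also unavailable: here the candidate generating set is all of $\Inv_{<\binom n2-1}$, and the coefficient of the dominating series $F$ at degree $\binom n2-1$ far exceeds $\dim\Inv[n,\binom n2-1]$, so the condition cannot fail. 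For the upper bound, the peeling step is not an induction on degree at all: every superposition occurring in $\exps{\g'}\exps{e}$ has the same total degree $d$ as $\g$, so the ``correction terms'' live in $\Inv[n,d]$ and you have exhibited no well-founded order on degree-$d$ multigraphs along which the rewriting terminates (the triangularity in Proposition~\ref{prop.connected_generate} is with respect to the number of connected components, which peeling does not decrease). Moreover, the heuristic ``a repeated edge can be peeled off'' is false as a degree-by-degree principle: already at degree $2$ the double-edge invariant is not in $\K[\Inv_{<2}]_2$ (it is a generator, as the $n=4$ minimal generating set shows), so any valid version of this reduction must use the hypothesis $d\ge\binom n2$ in an essential way that your sketch does not identify. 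You are right that the upper bound is beyond the paper's tools, but the same is true of the lower bound for $n\ge6$; the statement remains open.
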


\begin{figure}
  \begin{center}
    \includegraphics[scale=\gnuplotscale]{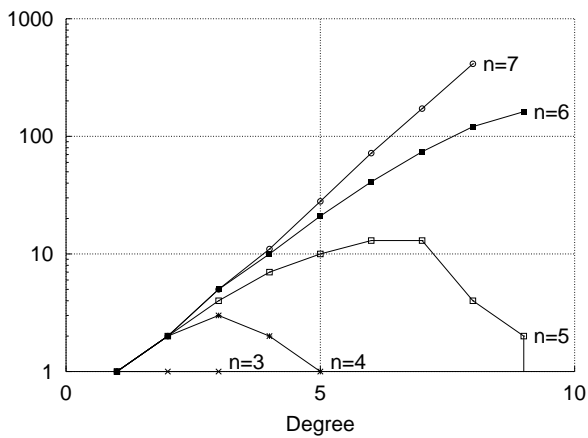}
  \end{center}
  \caption{$s_d(\Inv)$: number of invariants per degree $d$ in a minimal
    generating set of $\Inv$}
  \label{fig.gen}
\end{figure}

\section{The Gorenstein property}

\label{Gorenstein}

In this section, we show that the invariant ring $\Inv$ is Gorenstein
when $n$ is even, which indicates several duality properties of
$\Inv$. In particular, $e_t = d_1+\dots+d_m - m$ and there are as many
secondary invariants of degree $e_t-d$ and degree $d$. Actually, this
could be used to considerably speed up the construction of secondary
invariants~\cite{Thiery.P.2001}.

\begin{lemma}
  (i) Let $\sigma$ be a permutation of the vertices, that is an
  element of $\sym_n$, and $\overline\sigma$ be the corresponding
  permutation of the edges in $\G$. Then:
  \begin{xalignat*}{2}
    \sign(\overline\sigma)&=\sign(\sigma) && \text{if $n$ is odd},\\
    \sign(\overline\sigma)&=1 && \text{if $n$ is even}.
  \end{xalignat*}

  (ii) If $n$ is even, $\G$ is a subgroup of the special linear group
  $\SL(V)$.

  (iii) If $n$ is odd, the representation of $\sym_n$ on the
  irreducible component $[n-2,2]$ is a subgroup of $\SL(V)$.
\end{lemma}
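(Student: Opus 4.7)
The plan is to establish (i) first by a direct cycle-counting argument on a transposition, and then derive (ii) and (iii) by combining (i) with the fact that a permutation representation always has determinant equal to the sign of the acting permutation.

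For (i), I would first observe that both $\sigma\mapsto\sign(\sigma)$ and $\sigma\mapsto\sign(\overline\sigma)$ are group homomorphisms $\sym_n\to\{\pm 1\}$ (the second is a composition of the map $\sigma\mapsto\overline\sigma$ with the usual sign), so it suffices to check agreement on a set of generators. I would pick the transposition $\tau=(a\ b)$ and examine how $\overline\tau$ acts on the basis $\{\e_\ij\}$ of $\V$: the edge $\{a,b\}$ is fixed, every edge disjoint from $\{a,b\}$ is fixed, and for each of the $n-2$ vertices $c\notin\{a,b\}$ the edges $\{a,c\}$ and $\{b,c\}$ are exchanged. Hence $\overline\tau$ is a product of exactly $n-2$ disjoint transpositions, so $\sign(\overline\tau)=(-1)^{n-2}$. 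This equals $-1=\sign(\tau)$ when $n$ is odd and $+1$ when $n$ is even, which is precisely the claim of (i) on generators; it extends to all of $\sym_n$ by multiplicativity.

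For (ii), since $\G$ acts on $\V$ by permuting the basis $\{\e_\ij\}$, every $\overline\sigma\in\G$ is a permutation matrix with $\det(\overline\sigma)=\sign(\overline\sigma)$. When $n$ is even, part (i) gives $\sign(\overline\sigma)=1$ for all $\sigma$, hence $\G\subseteq\SL(V)$.

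For (iii), I would exploit the decomposition $\V=[n]\oplus[n-1,1]\oplus[n-2,2]$ recalled in \S~\ref{valuated_graphs}. The subrepresentation $[n]\oplus[n-1,1]$ is the permutation representation of $\sym_n$ on the stars $\E_1,\dots,\E_n$, so its determinant on a vertex permutation $\sigma$ is $\sign(\sigma)$. The determinant on the whole space $\V$ is $\sign(\overline\sigma)$, which by (i) is also $\sign(\sigma)$ when $n$ is odd. By multiplicativity of determinants over direct sums, the determinant of the action on the complementary component $[n-2,2]$ equals $\sign(\sigma)/\sign(\sigma)=1$, so this component lands inside $\SL$.

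The only nontrivial computation is the cycle count in (i); once that is in hand, (ii) and (iii) are immediate, so I do not anticipate any real obstacle.
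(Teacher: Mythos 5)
Your proposal is correct and follows essentially the same route as the paper: part (i) is established by counting the $n-2$ pairs of edges exchanged by a transposition of vertices, and parts (ii) and (iii) follow by identifying the determinant of a permutation matrix with the sign of the permutation and, for (iii), dividing the determinant on $\V$ by the determinant on the star component $[n]\oplus[n-1,1]$. The extra details you supply (the homomorphism argument for reducing to generators, the explicit listing of fixed and swapped edges) are exactly what the paper leaves implicit.
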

\begin{proof}
  (i) If $\sigma$ is a transposition of $2$ vertices, then
  $\overline\sigma$ exchanges $n-2$ pairs of edges, and
  $\sign(\overline\sigma)=(-1)^{n-2}$.

  (iii) Take $\sigma\in\sym_n$, and $M$ the matrix of the
  representation of $\sigma$ on the component $[n-2,2]$. The
  determinant of the representation of $\sigma$ on $\V$ is
  $\sign(\overline\sigma)$, whereas the sign of the representation of
  $\sigma$ on the other component $[n]\oplus[n-1,1]$ is
  $\sign(\sigma)$ (natural representation of $\sym_n$). Therefore,
  $\det(M)=\sign(\overline\sigma)/\sign(\sigma)=1$, if $n$ is odd.
\end{proof}

Then, Watanabee's theorem~\cite[\S~8]{Stanley.1979} applies.
\begin{theorem}
  (i) When $n$ is even, $\Inv$ is Gorenstein.

  (ii) When $n$ is odd, the invariant ring over the irreducible
  component $[n-2,2]$ is Gorenstein.
\end{theorem}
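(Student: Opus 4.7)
The plan is simply to invoke Watanabe's theorem (as the paper already signals), which asserts that for any finite subgroup $H \subseteq \SL(V)$, the invariant ring $\K[V]^H$ is Gorenstein. Thus the whole content of the proof lies in verifying the $\SL$-hypothesis in each case, which has already been done in the preceding lemma.

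For part (i), when $n$ is even, part (ii) of the lemma gives $\G \subseteq \SL(\V)$. So applying Watanabe's theorem to the pair $(\G, \V)$ directly yields that $\Inv = \K[\V]^\G$ is Gorenstein. No further computation is needed.

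For part (ii), when $n$ is odd, the representation $\G$ on $\V$ is not contained in $\SL(\V)$ (since odd vertex permutations give odd edge permutations, by part (i) of the lemma), so Watanabe does not apply to $\Inv$ itself. However, on the irreducible component $W := [n-2,2]$, part (iii) of the lemma shows that the restricted representation $\sym_n \to \GL(W)$ lands inside $\SL(W)$. Hence Watanabe applies to this restricted action, and the invariant ring $\K[W]^{\sym_n}$ is Gorenstein.

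There is essentially no obstacle: the combinatorial sign computation $\sign(\overline\sigma) = (-1)^{n-2}\sign(\sigma)$ for a transposition already carries the whole arithmetic difficulty, and was dispatched in the lemma. The only minor point to be careful about is making sure that in case (ii) one is invoking Watanabe for the action on $W$ rather than on $\V$, so that the conclusion is about $\K[W]^{\sym_n}$ and not about $\Inv$; this distinction is essential since $\Inv$ itself need not be Gorenstein when $n$ is odd (indeed, $\mu_n = \lceil \tfrac{3}{4}(n-1)\rceil > 0$ already shows the symmetry predicted by the Gorenstein property fails on $\Inv$ in that case).
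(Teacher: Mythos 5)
Your proof is correct and is exactly the paper's argument: the paper likewise gives no separate proof of the theorem beyond the remark that Watanabe's theorem applies, the entire content being the $\SL$-containments established in parts (ii) and (iii) of the preceding lemma. (One cosmetic slip: for a transposition the lemma gives $\sign(\overline\sigma)=(-1)^{n-2}$, not $(-1)^{n-2}\sign(\sigma)$ as in your parenthetical recap; this does not affect your argument.)
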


\section{The chain product}

\label{chain_product}

We have discussed the power of the grading of the invariant ring. We
now define another product on the invariant ring $\Inv$, called the
\emph{chain product}, which preserves a finer grading and has a nice
computational behavior. Most algebraic properties of the invariant
ring with respect to the chain product transfer back to the usual
product. We only construct and use the chain product for $\Inv$, but
it generalizes to any permutation group~\cite{Thiery.P.2001}.

Let $\g$ be a multigraph. As in the following example, it can be
interpreted as a superposition of simple graphs
$\g_1,\g_2,\cdots,\g_k$, where
$\g_1\supseteq\g_2\supseteq\cdots\supseteq\g_k$:
\begin{displaymath}
  \graph{autogen/graph-1300003031}
  \quad \longleftrightarrow \quad
  \graph{autogen/graph-1100001011} \supseteq
  \graph{autogen/graph-0100001010} \supseteq
  \graph{autogen/graph-0100001010}
\end{displaymath}
Thus, $\g$ can be identified with the \emph{multichain} (\ie chain
with repetitions)
$C(\g):=\g_1\supseteq\g_2\supseteq\cdots\supseteq\g_k$ of simple
graphs. The \emph{shape} $\lambda(\g)$ of $\g$ is the decreasing
sequence of the sizes of the simple graphs in $C(\g)$. Here,
$\lambda(\g)=(5,3,3)$. A polynomial is called
\emph{finely-homogeneous} if all its monomials have the same shape.
Since two monomials $\expm\g$ and $\expm{\g'}$ in the same
$\sym_n$-orbit have the same shape, any invariant decomposes into a
sum of finely-homogeneous invariants. Therefore, the shape defines a
fine grading on the invariant ring $\Inv$, and we denote by
$\Inv[n,(5,3,3)]$ the \emph{finely homogeneous component} of $\Inv$
for the shape $(5,3,3)$.

The usual product does not preserve this grading:
\begin{displaymath}
  \pgraph{autogen/graph-100000}\pgraph{autogen/graph-100000}=
  \pgraph{autogen/graph-200000}+
  2\pgraph{autogen/graph-110000}+
  2\pgraph{autogen/graph-100001}.
\end{displaymath}
The \emph{chain product} $\expm\g\star\expm\h$ of two monomials
$\expm\g$ and $\expm\h$ is the usual product of $\expm\g$ and
$\expm\h$ if the two multichains $C(\g)$ and $C(\h)$ can be merged
into another multichain, and zero otherwise. The chain product extends
to invariants, and yields for example:
\begin{displaymath}
  \pgraph{autogen/graph-100000}\star\pgraph{autogen/graph-100000}=
  \pgraph{autogen/graph-200000}
\end{displaymath}
The chain product preserves the fine grading of the invariant ring,
since the shape $\expm\g\star\expm\h$ can be obtained by merging the
shapes of $\g$ and $\h$.

The invariant ring $\Inv$ together with the chain product is actually
isomorphic to the Stanley-Reisner ring of the poset of unlabelled
graphs on $n$ vertices ordered by subgraph. Stanley-Reisner rings of
posets have been intensively studied, in particular by Garsia and
Stanton~\cite{Garsia_Stanton.1984} to construct Hironaka
decompositions of invariant rings of certain permutation groups. We
did not succeed in using this theoretical framework to get a Hironaka
decomposition of $\Inv$. The need of taking the elementary symmetric
polynomials as a system of parameters causes the main difficulty.
Indeed, this is not a low degree system of parameters and there are
too many secondary invariants. However, even our naive point of view
of the Stanley-Reisner ring as an alternative product on $\Inv$ yields
dramatic speed ups of the computations.

The following proposition is the heart of this technique.
\begin{proposition}[\cite{Garsia_Stanton.1984}]
  A Hironaka decomposition of $\Inv$ for the chain product, is also a
  Hironaka decomposition of $\Inv$ for the usual product.
\end{proposition}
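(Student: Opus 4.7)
The plan is to exhibit the usual product on $\Inv$ as a filtered deformation of the chain product, and then invoke a standard filtration-to-associated-graded argument to transport the Hironaka decomposition.

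First I would introduce a partial order $\preceq$ on shapes (partitions of the total degree) making the chain product the leading-order term of the usual product. The example in the paper, where the square of the single-edge invariant has the chain-product contribution of shape $(1,1)$ and two extra terms of shape $(2)$, suggests that shapes with more layers should dominate shapes with fewer; the reverse dominance order, or simply the length of the partition, is the natural candidate. For finely-homogeneous invariants $f$ of shape $\lambda$ and $g$ of shape $\mu$, I would then prove
\begin{displaymath}
  f \cdot g \;=\; f \star g \;+\; R,
\end{displaymath}
where $R$ is a sum of finely-homogeneous pieces whose shapes are strictly lower in $\preceq$ than the shape of $f \star g$. The proof is combinatorial: expanding $\exps{\g_1}\cdot\exps{\g_2}$ as a sum of superpositions, only the superpositions whose multichains genuinely interleave contribute the merged shape, while every other overlap collapses at least two layers into a smaller layer and thereby strictly decreases the shape.

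With this leading-term identity, set $F^\lambda \Inv := \bigoplus_{\mu \succeq \lambda} \Inv[n,\mu]$. This is a multiplicative decreasing filtration for the usual product, and the induced product on the associated graded $\mathrm{gr}^\bullet \Inv$ is precisely the chain product; thus the two products coincide up to strictly lower-shape corrections in every total-degree component. The Hironaka decomposition transfers as follows. Starting from $\Inv = \bigoplus_i \eta_i \star \K[\theta_1,\ldots,\theta_m]_\star$, I may assume the $\theta_j$ and $\eta_i$ are finely-homogeneous, since the chain product respects the fine grading. In each total-degree homogeneous component, the family $\{\eta_i \star p : p \text{ a } \star\text{-monomial in the } \theta_j\}$ is by hypothesis a $\K$-basis. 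By the leading-term identity, the corresponding family $\{\eta_i \cdot p\}$ has the same finely-homogeneous leading terms, so a triangular change of basis indexed by $\preceq$ shows it is also a basis, giving $\Inv = \bigoplus_i \eta_i \cdot \K[\theta_1,\ldots,\theta_m]$.

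The main obstacle will be the first step: fixing the precise partial order on shapes and verifying the leading-term identity. A careful combinatorial analysis of how the multichain of a superposition compares to the merged multichain of its summands is required, and one must show that \emph{every} non-chain-merged superposition strictly decreases the shape. I expect an induction on the length of the multichains to suffice, but this combinatorial verification is the heart of the argument; once it is in place, the algebraic transfer is essentially automatic.
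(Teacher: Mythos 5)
Your proposal follows essentially the same route as the paper: its proof likewise rests on the single key fact that, for finely homogeneous $p$ and $q$, the maximal finely homogeneous component of $pq$ is exactly $p\star q$, followed by an induction over the fine grading --- which is precisely your leading-term identity plus triangular change of basis along the order on shapes. The combinatorial verification you flag as the remaining obstacle (fixing the order --- reverse dominance is the one that works, length alone is not enough since a non-merged superposition can preserve the number of layers --- and checking that every non-merged superposition strictly drops) is exactly the part the paper also states without proof.
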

The key of the proof is that, if $p$ and $q$ are finely homogeneous,
the maximal finely homogeneous component of $pq$ is exactly $p\star
q$. The result follows by induction over the fine grading. We used the
same principle to prove a similar result on generating sets.
\begin{proposition}
  \label{prop.transfert_generating}
  A generating set of $\Inv$ for the chain product is a generating set
  of $\Inv$ for the usual product.
\end{proposition}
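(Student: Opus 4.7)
The plan is to mimic the authors' argument for Hironaka decompositions, inducting on the fine grading by shape. Since every invariant of $\Inv$ decomposes uniquely into finely homogeneous components, it suffices to show that every finely homogeneous invariant $f$ of shape $\nu$ lies in the usual-product subalgebra generated by a fixed chain-product generating set $S$. I would fix a well-founded partial order $\preceq$ on shapes, induced by merger of multichains, with the property that for any finely homogeneous $p$ and $q$ the shape of $p\star q$ (when non-zero) is strictly $\succ$ the shapes of all the other finely homogeneous components of the ordinary product $pq$. This is essentially the content of the authors' key remark immediately preceding the proposition.

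With $\preceq$ in hand, a straightforward induction on the number of factors extends the two-factor statement to any finite product: for $s_{i,1},\ldots,s_{i,k}\in S$,
\begin{displaymath}
  s_{i,1}\cdots s_{i,k} = s_{i,1}\star\cdots\star s_{i,k} + R_i,
\end{displaymath}
where $R_i$ is a sum of finely homogeneous invariants of shape strictly $\prec$ the shape of $s_{i,1}\star\cdots\star s_{i,k}$. Associativity of $\star$, which reflects the associativity of multichain merging, is what allows this iteration to go through cleanly.

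Now let $f$ be finely homogeneous of shape $\nu$. Since $S$ generates $\Inv$ for $\star$, we may write $f = \sum_i c_i\, s_{i,1}\star\cdots\star s_{i,k_i}$; by the uniqueness of the fine decomposition only the indices $i$ whose chain product has shape exactly $\nu$ contribute. Substituting the ordinary product for $\star$ gives $g := \sum_i c_i\, s_{i,1}\cdots s_{i,k_i}$, whose shape-$\nu$ component equals $f$ while the remaining finely homogeneous components of $g$ have shape $\prec \nu$. Hence $f - g$ is supported on shapes strictly below $\nu$ and, by the induction hypothesis, lies in the subalgebra generated by $S$ for the usual product; therefore so does $f$, which is what we wanted.

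The one delicate point is pinning down the partial order $\preceq$ so that the ``maximality'' in the key lemma is genuine, well-founded on shapes of bounded total degree, and preserved under iteration over arbitrarily many factors without unexpected cancellation at an intermediate shape. Once that bookkeeping is in place, the remainder of the argument is a purely formal transfer of Garsia and Stanton's Hironaka-decomposition argument to the context of generating sets, and no new computation is required.
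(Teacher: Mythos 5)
Your argument is essentially the paper's own proof: the author's justification consists precisely of the key fact that the maximal finely homogeneous component of $pq$ is $p\star q$, followed by induction over the fine grading by shape, which is exactly the transfer you carry out. The only caveat is that your induction implicitly takes the chain-product generating set to consist of finely homogeneous elements (so that each chain monomial has a well-defined shape); this matches every generating set actually considered in the paper, and the general case reduces to it via the fine grading.
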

In all our examples, however, minimal generating sets for the chain
product were far from being minimal for the usual product.

The elementary symmetric polynomials form a system of parameters for
the chain product. We do not know if there are other systems of
parameters, since the usual characterization from
proposition~\ref{carac.parameters} does not apply for the chain
product. In particular, the symmetric power sums do not form a system
of parameters for the chain product. They are not even algebraically
independent since $\sum x_\ij^k=(\sum x_\ij)^k$. Given the size of the
minimal generating sets we computed, there are no systems of
parameters for the chain product with degrees as low as in
conjecture~\ref{conj.degrees_parameters}.

In~\cite{Thiery.P.2001}, we describe how to use this product for
faster computations. Practically, we could push the computation of a
partial minimal generating set $S$ for $\Inv[5]$ up to the degree $22$
instead of only $10$. This is a significant progress, considering that
the dimension of $\Inv[5,22]$ is $174403$, whereas the dimension of
$\Inv[5,10]$ is only $974$. Unfortunately, we cannot use a low-degrees
system of parameters, so the degree bound is $42$ instead of $22$.
This means that there is still a lot of work to do to get a full
minimal generating set for the chain product. On the other hand, this
partial computation yields a generating set for the usual product,
since $\beta(\Inv[5])\le22$.
\begin{proposition}
  The computed set $S$ is a generating set of $\Inv[5]$ for the usual
  product. However, $S$ has more than one thousand invariants of
  degree up to $22$.
\end{proposition}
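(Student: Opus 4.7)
The plan is to combine the \apriori degree bound $\beta(\Inv[5])\le22$ established in section~\ref{parameters} with the transfer principle of proposition~\ref{prop.transfert_generating}. Since $\beta(\Inv[5])\le22$, the subalgebra $\K[S]$ generated by $S$ under the usual product equals $\Inv[5]$ as soon as $\K[S]$ contains every homogeneous component $\Inv[5,d]$ with $d\le22$; so the task reduces to showing that $S$ usual-product-generates the truncation $\bigoplus_{d\le22}\Inv[5,d]$.

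First I would appeal to the construction of $S$ in \permuvar: the chain-product analogue of algorithm~\ref{algo.sec}, run degree by degree up to $d=22$, produces by construction a chain-generating set of the truncation. That is, in each degree $d\le22$, the chain products of elements of $S$ of total degree $d$ span $\Inv[5,d]$ as a $\K$-vector space. This is a correctness statement about the algorithm, established in~\cite{Thiery.P.2001}, and not something to be reproved here.

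Next I would transfer chain-generation into usual-product generation by repeating the induction that underlies proposition~\ref{prop.transfert_generating}. The key point is that for finely homogeneous $p$ and $q$ the chain product $p\star q$ coincides with the component of maximal shape of the usual product $pq$, so $pq=p\star q+r$, where $r$ is a sum of finely homogeneous invariants of strictly smaller shape but of the same total degree $\deg p+\deg q$. A downward induction on the fine grading, carried out at fixed total degree, then rewrites every finely homogeneous chain-product polynomial in $S$ as a usual-product polynomial in $S$. Since every invariant decomposes into finely homogeneous pieces of equal total degree, this yields $\Inv[5,d]\subseteq\K[S]$ for all $d\le22$, and combining with $\beta(\Inv[5])\le22$ gives $\Inv[5]=\K[S]$.

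The cardinality claim $|S|>1000$ is a direct byproduct of the computation and requires no further argument. The only genuine subtlety in the proof is to observe that the fine-grading induction preserves the total degree, and therefore stays inside the truncation $d\le22$; this is what allows the partiality of $S$ (it is computed only up to degree $22$, well below the chain-product bound of $42$) to cause no obstruction once the usual-product bound $\beta(\Inv[5])\le22$ is in hand.
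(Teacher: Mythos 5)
Your proof is correct and follows essentially the same route as the paper, which justifies the proposition by combining the bound $\beta(\Inv[5])\le22$ from \S~\ref{parameters} with the transfer principle of proposition~\ref{prop.transfert_generating}. The one point you make explicit that the paper leaves implicit --- that the fine-grading induction behind the transfer preserves total degree, so the partial chain-product computation up to degree $22$ suffices despite the chain-product degree bound being $42$ --- is exactly the right subtlety to flag.
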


To conclude, the usual product allowed us to compute a small set, with
is minimal, but not necessarily generating, whereas the chain product
allowed us to compute a set which is generating, but far from being
minimal.

\section{The invariant ring for $n=\infty$}

\label{infinity}

In this section, we study the projective limit $\Inf$ of the invariant
ring, and get back some information on $\Inv$.

A multigraph $\g$ on $n'\le n$ non-isolated vertices can be identified
with a multigraph on $n$ vertices by adding $n-n'$ isolated vertices.
This defines $\exps\g$ in $\Inv$. The set $B_n$ of all invariants
$\exps\g$, where $\g$ is a multigraph on less than $n$ non-isolated
vertices, is obviously a vector space basis of $\Inv$. For $n'\le n$,
let $\Phi_{n'}$ be the linear projection from $\Inv$ to $\Inv[n']$
which maps $\exps\g$ (in $\Inv$) to $0$ if $\g$ has strictly more than
$n'$ non-isolated vertices, and to $\exps\g$ (in $\Inv[n']$)
otherwise. Our definition of the exponential (see
\S~\ref{invariants_permutation}) makes it a surjective morphism of
graded algebra.  The projective limit of $\Inv$:
\begin{displaymath}
  \Inv[1] \stackrel{\Phi_1}\twoheadleftarrow
  \Inv[2] \stackrel{\Phi_2}\twoheadleftarrow
  \dots   \twoheadleftarrow
  \Inv[n] \stackrel{\Phi_n}\twoheadleftarrow
  \dots                    \twoheadleftarrow
  \Inf,
\end{displaymath}
defines a graded algebra $\Inf$, with a canonical vector space basis
$B_\infty:=\{\exps\g\}$ indexed by the multigraphs $\g$ on a finite
number of non-isolated vertices.

\begin{proposition}
  (i) $\Inf$ is the free polynomial ring over $C:=\{\exps\g \st \text{
    $\g$ is connected}\}$.

  (ii) The canonical morphism of graded algebra $\Phi_n:
  \Inf\twoheadrightarrow\Inv$ is an isomorphism up to the degree
  $\lfloor\frac n2\rfloor$.
\end{proposition}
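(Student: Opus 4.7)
The plan is to handle (i) by a triangular basis argument inside $\Inf$, and then deduce (ii) from an easy support count on the canonical basis $B_\infty$.

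For part (i), the goal is to show that the natural algebra morphism $\varphi:\K[T_\c \st \c \text{ connected}] \to \Inf$ sending $T_\c \mapsto \exps\c$ is an isomorphism. The key step is to establish, inside $\Inf$, an identity of the form
\begin{equation*}
  \exps{\c_1}\cdots\exps{\c_k} = \lambda_\g\,\exps\g + \sum_\h \mu_{\g,\h}\,\exps\h,
\end{equation*}
whenever $\g$ is a multigraph whose list of connected components is $\c_1,\dots,\c_k$. Here $\lambda_\g$ is a positive integer (it counts the orderings of the components of $\g$ matching the sequence $\c_1,\dots,\c_k$), and each $\h$ appearing in the correction sum has strictly fewer non-isolated vertices than $\g$, since it arises from a placement in which some $\c_i$ share at least one vertex. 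This is the direct analogue in $\Inf$ of the identity used to prove proposition~\ref{prop.connected_generate}(i); the simplification afforded by the projective limit is that one never has to pad the $\c_i$ with isolated vertices, so only genuine overlap terms occur. By induction on the number of non-isolated vertices of $\g$, every $\exps\g\in B_\infty$ lies in the image of $\varphi$, which proves surjectivity. Reading the same identity in the opposite direction, and ordering multigraphs by the multiset of their connected components, shows that distinct monomials $\prod_i T_{\c_i}^{m_i}$ have distinct ``leading'' basis vectors (namely the disjoint unions $m_1\c_1\sqcup\cdots\sqcup m_r\c_r$), which gives injectivity.

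For part (ii), recall that $\Phi_n$ acts on $B_\infty$ by sending $\exps\g\in B_\infty$ to $\exps\g\in B_n$ whenever $\g$ has at most $n$ non-isolated vertices, and to $0$ otherwise. Since a multigraph with $d$ edges has at most $2d$ non-isolated vertices, for $d\le\lfloor\frac n 2\rfloor$ every degree-$d$ element of $B_\infty$ has at most $n$ non-isolated vertices and is therefore sent by $\Phi_n$ to a distinct nonzero element of $B_n$. Conversely, every degree-$d$ element of $B_n$ is obtained (by erasing isolated vertices) from such an element of $B_\infty$. Thus $\Phi_n$ restricts to a bijection between the degree-$d$ pieces of $B_\infty$ and $B_n$ for all $d\le\lfloor\frac n 2\rfloor$, and the compatibility with multiplication is automatic because $\Phi_n$ is already a graded algebra morphism.

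The main obstacle will be the careful bookkeeping behind the triangular identity of part (i): one has to verify that $\lambda_\g$ is truly nonzero (by counting ordered refinements of the connected-component decomposition of $\g$, which involves the automorphism group of the multiset of components) and, more delicately, that every overlap term $\h$ really has strictly fewer non-isolated vertices than $\g$. Once this combinatorial check is in place, both parts follow from routine linear algebra on the graded basis $B_\infty$.
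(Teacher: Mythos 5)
Your proof is correct and follows essentially the same route as the paper: generation of $\Inf$ by $C$ via the triangular identity from proposition~\ref{prop.connected_generate}, algebraic independence via the ``marker'' term $\exps\g$ (the disjoint union of the components) in each product, and part (ii) from the observation that a multigraph with $d$ edges has at most $2d$ non-isolated vertices. If anything, you are slightly more careful than the paper, which asserts the marker coefficient is $1$ whereas it is in general the positive integer $\prod_j m_j!$ you describe; this does not affect the argument.
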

\begin{proof}
  (i) Following the proof of proposition~\ref{prop.connected_generate}
  (ii), $C$ generates $\Inf$. Now, let $g_1,\dots,g_k$ be $k>0$
  connected multigraphs.  In the product
  $\exps{\g_1}\cdots\exps{\g_k}$, there is a term $\exps\h$ with
  coefficient $1$, where $\h$ is the disconnected multigraph whose
  connected components are precisely the $g_i$. This term is a marker
  of the product $\exps{\g_1}\cdots\exps{\g_k}$ in any non-trivial
  polynomial combination of elements of $S$. The algebraic
  independence follows.

  Any multigraph with $d$ edges and no isolated vertices has
  less than $2d$ vertices. (ii) follows.
\end{proof}

\begin{corollary}
  $\beta(\Inv)\ge \lfloor\frac n2\rfloor$.
\end{corollary}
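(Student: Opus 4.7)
The plan is to exhibit an invariant in $\Inv$ of degree exactly $d:=\lfloor n/2\rfloor$ that cannot be written as a polynomial in invariants of strictly smaller degree; this immediately forces $\beta(\Inv)\geq d$.

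First, I would choose a specific connected multigraph $\g$ of degree $d$ that fits in $n$ vertices; the simplest choice is the path with $d$ edges, which has $d+1\leq 2d+1\leq n+1$ non-isolated vertices, or more safely the path/cycle on at most $2d\leq n$ non-isolated vertices (e.g.\ a single edge with multiplicity $d$ if $n\geq 2$, which is connected and has $d$ edges on $2$ vertices). Any such $\g$ yields an element $\exps\g\in C$, a generator of the free polynomial ring $\Inf$ by part~(i) of the proposition.

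Next, by part~(i), $\Inf$ is a free polynomial algebra on the set $C$ of connected exponentials. In a free polynomial ring, each generator is irreducible in the sense that it does not lie in the subalgebra generated by the other generators, and in particular $\exps\g\in\Inf$ cannot be expressed as a polynomial in elements of $\Inf$ of degree strictly less than $d$. Now I would invoke part~(ii): the canonical morphism $\Phi_n:\Inf\twoheadrightarrow\Inv$ is an isomorphism of graded algebras up to degree $d=\lfloor n/2\rfloor$. Concretely, this means both the homogeneous component of degree $d$ and all lower ones are mapped bijectively, and products landing in degree $\leq d$ are preserved. Consequently, if $\exps\g\in\Inv$ could be written as a polynomial in invariants of degree $<d$, we could lift such a relation through $\Phi_n^{-1}$ (on the components of degree $\leq d$) to the same relation in $\Inf$, contradicting the algebraic independence established in part~(i).

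Hence $\exps\g$ is a homogeneous invariant of degree $d$ in $\Inv$ that lies outside $\K[\Inv[n,<d]]$, so any homogeneous generating set of $\Inv$ must contain an invariant of degree at least $d$, giving $\beta(\Inv)\geq\lfloor n/2\rfloor$. The only delicate point is making sure the lifting argument in the third paragraph is watertight: I need that $\Phi_n$ restricted to the subspace of $\Inf$ of degree $\leq d$ is a vector space isomorphism onto the corresponding subspace of $\Inv$ and is compatible with the multiplication whenever the product has degree $\leq d$, both of which follow directly from $\Phi_n$ being a morphism of graded algebras that is bijective in those degrees.
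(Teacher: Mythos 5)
Your proof is correct and is essentially the paper's own argument: the corollary is stated there as an immediate consequence of the proposition, via exactly the route you spell out (a connected multigraph with $\lfloor n/2\rfloor$ edges gives a free generator of $\Inf$, and the graded isomorphism $\Phi_n$ in degrees $\le\lfloor n/2\rfloor$ transfers its indecomposability to $\Inv$). The only blemish is the inequality $d+1\le 2d+1\le n+1$ for the path, which proves nothing (you need $\le n$, which does hold since $\lfloor n/2\rfloor+1\le n$ for $n\ge2$), but your fallback choice of a single edge of multiplicity $d$ makes this harmless.
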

This lower bound is loose: for $n\le5$, we know that
$\beta(\Inv)\ge\binom n 2-1$ and for $11\le n\le18$, it follows from
theorem~\ref{th.not_gen} (ii) that $\beta(\Inv)\ge n-2$. We expect
that refining this technique will yield much better lower bounds.

By (ii), the Hilbert series $H(\Inf,z)$ is the limit of the Hilbert
series $H(\Inv,z)$ as $n$ goes to infinity, and by (i)
\begin{displaymath}
  H(\Inf,z)=\prod_{d=1}^\infty \frac{1}{(1-z^d)^{n_d}},
\end{displaymath}
where $n_d$ is the number of connected multigraphs with $d$ edges. We
do not know how to directly compute $H(\Inv,z)$, or whether there exists a
closed form formula. The only asymptotic studies we have seen in the
literature deal with $n$ fixed and $d$ going to
infinity~\cite{Harary_Palmer.GE}.

\section{Unimodality}

\label{unimodality}

A startling fact revealed by our computations of minimal generating
sets (MGS) lies in Figure~\ref{fig.gen}, which shows the coefficients
of $\MGSGS(\Inv,z)$. For $n\le4$ and most likely for $n=5$, this
polynomial is \emph{unimodal}: the coefficients first increase
with the degree, and then decrease down to $0$.
\begin{conjecture}
  The polynomial $\MGSGS(\Inv,z)$ is unimodal.
\end{conjecture}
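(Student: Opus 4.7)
The plan is to reinterpret $\MGSGS_d(\Inv)$ as the dimension of the degree-$d$ graded piece of $J/J^2$, where $J\subset\Inv$ is the ideal of positive-degree invariants; equivalently, $\MGSGS_d(\Inv)$ is the first graded Betti number of the residue field over $\Inv$. Unimodality of $\MGSGS(\Inv,z)$ is then the assertion that this Betti-number sequence has a single peak. Before any structural attack, I would first extend the numerical verification as far as feasible: for $n\le 5$ a complete minimal generating set is known, and for $n$ up to $21$ one can combine the computable Hilbert series with the Hironaka bound of Proposition~\ref{prop.bounds_symmetric} and the partial data produced by \permuvar in order to test the shape of $\MGSGS(\Inv,z)$ on its low-degree portion, ruling out near counter-examples.

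The structural approach I would pursue rests on the Gorenstein property established in Section~\ref{Gorenstein}. For $n$ even, fix a Hironaka decomposition with primaries of low degree, ideally those of Conjecture~\ref{conj.primaries}. The Artinian quotient $A:=\Inv/(\theta_1,\dots,\theta_m)$ is then a graded Gorenstein $\K$-algebra whose Hilbert series $z^{e_1}+\dots+z^{e_t}$ is palindromic around $e_t/2$. If $A$ enjoys even the Weak Lefschetz Property --- multiplication by a generic linear form has maximal rank between consecutive graded pieces --- its palindromic Hilbert series is automatically unimodal, so the number of secondary invariants per degree is unimodal. One must then pass from unimodality of all secondaries to unimodality of the irreducible secondaries, which are the ones actually contributing to $\MGSGS(\Inv,z)$; the subspace of decomposable secondaries is generated multiplicatively by lower-degree irreducibles, and a Lefschetz-type argument on this subalgebra structure should control how many new irreducibles must appear in each degree. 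For $n$ odd, where $\Inv$ itself fails to be Gorenstein, one can attempt a transfer from even $n+1$ through the surjection $\Phi_{n+1}\colon \Inv[n+1]\twoheadrightarrow\Inv$ of Section~\ref{infinity}, or adapt the argument to the invariant ring on the irreducible component $[n-2,2]$, which remains Gorenstein.

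The hard part, and the reason this remains a conjecture, is that even the Weak Lefschetz Property is a delicate statement: it is known for coinvariants of Weyl groups, for cohomology rings of smooth projective varieties, and for certain monomial complete intersections, but is \emph{not} expected to hold for arbitrary Artinian Gorenstein algebras, and nothing in the combinatorial description of $\Inv$ visibly forces such a property. Equally obstructing is that, unlike $\dim\Inv_d$ which is a P\'olya count over multigraphs, the sequence $\MGSGS_d(\Inv)$ has no known combinatorial interpretation --- irreducibility is a global algebraic condition --- so a direct injective proof at the level of multigraphs is not available. A realistic partial goal along these lines would be unimodality of the secondary-degree polynomial for even $n$, conditional on Weak Lefschetz for $A$, leaving the full conjecture, and especially the odd case, open.
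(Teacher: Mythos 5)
This statement appears in the paper only as a conjecture; the paper offers no proof of it, so there is no argument of the author's to compare yours against. What can be assessed is whether your strategy could plausibly close the gap, and in its present form it cannot, because its crucial bridge is contradicted by an example contained in the paper itself. Your chain is: Gorenstein (for $n$ even, via \S~\ref{Gorenstein}) plus a Weak Lefschetz Property for the Artinian reduction $A=\Inv/(\theta_1,\dots,\theta_m)$ gives a palindromic, unimodal count of secondary invariants per degree, from which you hope to extract unimodality of $\MGSGS(\Inv,z)$. But for $n\ge4$ the alternating group $\mathcal{A}_n$ is a permutation group contained in $\SL(V)$, hence its invariant ring is Gorenstein by Watanabe's theorem; taking the elementary symmetric polynomials as parameters, there are exactly two secondaries, of degrees $0$ and $\binom n2$, so the secondary series $1+z^{\binom n2}$ is as well-behaved as the paper's secondary-unimodality conjecture could demand. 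Yet the paper records that $\mathcal{A}_n$ is \emph{not} MGS-unimodal: its minimal generating set has degrees $1,\dots,n,\binom n2$, with a gap between $n$ and $\binom n2$. So ``Gorenstein $+$ unimodal secondaries'' does not imply unimodality of $\MGSGS$, and whatever is special about $\G$ (if the conjecture is true at all) is not captured by your approach. Note also that this counterexample arises precisely from the contribution of the \emph{primary} invariants to the minimal generating set, a contribution your proposal ignores entirely by focusing on irreducible secondaries alone.

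Two further steps are asserted without support. The passage from unimodality of all secondaries to unimodality of the irreducible ones (``a Lefschetz-type argument \dots should control how many new irreducibles must appear'') is exactly the hard combinatorial content of the conjecture, restated rather than addressed; the $\mathcal{A}_n$ example again shows that the decomposable/irreducible split can behave arbitrarily badly relative to the total count. And the proposed transfer from even $n+1$ to odd $n$ via the surjection $\Phi_{n+1}\colon\Inv[n+1]\twoheadrightarrow\Inv[n]$ gives no degree-by-degree control of a minimal generating set of the target: a graded surjection only guarantees that images of generators generate, and $\Phi_{n+1}$ is an isomorphism only up to degree $\lfloor\frac{n+1}2\rfloor$, far below the degrees where $\MGSGS_d(\Inv)$ lives. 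The numerical verification you propose is consistent with what the paper actually does, but the structural part of the proposal does not constitute progress toward a proof.
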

This would prove that the partial minimal generating set we computed
for $n=5$ is generating, and provide a very nice stopping condition
for algorithm~\ref{algo.sec}.

To figure out which properties of $\G$ could be useful to prove this
conjecture, we extend it to general groups of matrices. A finite
subgroup $G$ of $\GL(V)$ is \emph{MGS-unimodal} if the polynomial
$\MGSGS(I(G),z)$ is unimodal.
\begin{problem}
  Characterize MGS-unimodal groups.
\end{problem}
Not all groups are MGS-unimodal. Indeed, let $G$ be the subgroup of
$\GL(\C^2)$ generated by the matrix
\begin{displaymath}
  M:=
  \begin{bmatrix}
    j & 0\\
    0 & \overline j\\
  \end{bmatrix},
\end{displaymath}
where $j$ and $\overline j$ are the two non-trivial third roots of
unity. Obviously, $(x_1x_2, x_1^3, x_2^3)$ is a minimal generating set
of $I(G)$, and $\MGSGS(G,z)=z+0z^2+2z^3$, which is not unimodal.

Whereas the irreducible representations of the symmetric group are
thoroughly described, their invariants rings are barely known. In an
amazing but very technical paper~\cite{Dixmier.1991}, Dixmier has been
able to construct by hand minimal generating sets for several
irreducible representations of $\sym_n$, including all the irreducible
representations of $\sym_1,\dots,\sym_5$, except $[3,1,1]$. It follows
that the representations $[n]$,$[n-1,1]$, $[2,2]$ and $[3,2]$ are
MGS-unimodal, whereas the representations $[2,1^{n-2}]$ for $n\ge4$
and $[2,2,1]$ are not. This proves the existence non-MGS-unimodal
irreducible representations of the symmetric group.

The trivial group, the full symmetric group and multisymmetric
polynomials are MGS-unimodal. We checked with \permuvar that several
other small permutation groups are MGS-unimodal. It's tempting to
conjecture that all permutation groups are MGS-unimodal, since they
give rise to a lot of unimodality properties (see \cite{Stanley.1989};
note that, as opposite to here, the corresponding series are always
either log-concave or symmetric). However, for $n\ge4$, the
alternating group $\mathcal{A}_n$ is not MGS-unimodal. Indeed,
$I(\mathcal{A}_n)$ is generated by the elementary symmetric
polynomials of degrees $1,\dots,n$ together with the Van-der-Monde
determinant $\prod_{i<j} (x_i-x_j)$ of degree $\binom n 2$.

Figure~\ref{fig.secondaries} also shows that, up to $n=21$, the
generating series of the secondary invariants is unimodal (except at
$d=0$ and possibly $d=e_t$), and very smooth.
\begin{conjecture}
  Let $G$ be a permutation group, and $(\theta_1,\dots,\theta_m)$ be a
  system of parameters of $I(G)$. Then, the generating series of the
  secondary invariants is unimodal, except for $d=0$ and possibly
  $d=e_t$.
\end{conjecture}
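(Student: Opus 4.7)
The generating series $\sum_d s_d z^d$ of secondary invariants is the Hilbert series of the Artinian quotient $A := I(G)/(\theta_1, \dots, \theta_m)$. This holds because $I(G)$ is Cohen--Macaulay, so $(\theta_1, \dots, \theta_m)$ is a regular sequence, yielding
\[ H(A,z) = (1-z^{d_1})\cdots(1-z^{d_m})\, H(I(G), z), \]
which equals $z^{e_1} + \cdots + z^{e_t}$ by equation~(\ref{eq:degres_secondaires}). The conjecture is therefore equivalent to unimodality of the Hilbert function of the graded Artinian algebra $A$, allowing exceptions at the two endpoints.

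The natural strategy is a weak Lefschetz argument: exhibit a homogeneous element $L \in A_k$ such that multiplication by $L$ sends $A_d \to A_{d+k}$ with maximal rank for each $d$ in the interior. For a permutation group the most canonical degree--$1$ invariant is the power sum $p_1 = \sum x_i$, but when $G$ is transitive (as is $\G$ on the edges), $p_1$ typically coincides with $\theta_1$ and vanishes in $A$; then $A_1 = 0$ and one must look for a generalized Lefschetz element in higher degree. A convenient feature is that permutation groups always admit an explicit combinatorial basis of orbit sums of monomials, so multiplication by any fixed invariant has a combinatorial description amenable to injective--map arguments.

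I would carry out the proof in three stages. First, verify the conjecture in the tractable special case of the full symmetric group $\sym_m$ with elementary symmetric parameters, where $A$ is the classical coinvariant algebra whose Hilbert series $\prod_{i=1}^m (1 + z + \cdots + z^{i-1})$ is unimodal by a classical theorem of Sylvester. Second, handle multisymmetric polynomials and wreath products, where analogous coinvariant--algebra techniques extend. Finally, attempt the general permutation case by fixing a canonical parameter set (the power sums), and constructing the required injections between consecutive components of $A$ either via a Lefschetz element of suitable degree or directly via combinatorial bijections between orbit--indexed monomial classes.

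The main obstacle is that the weak Lefschetz property is known to fail for general Cohen--Macaulay Artinian algebras, so we have no intrinsic reason why it must hold for every quotient $I(G)/(\theta_1, \dots, \theta_m)$; moreover, the conjecture is stated uniformly over all systems of parameters, which means the argument cannot depend too delicately on the choice of the $\theta_i$. A plausible softer first target is to prove the conjecture only when the $\theta_i$ are symmetric power sums, matching the computational evidence in Figure~\ref{fig.secondaries}, and to exploit the residual action of $\sym_m$ on the ambient polynomial ring to endow $A$ with extra structure. The endpoint exceptions at $d=0$ and $d=e_t$ should emerge automatically, since $h_0 = 1$ and $h_{e_t}$ is the dimension of the socle of $A$ (equal to $1$ in the Gorenstein case of \S~\ref{Gorenstein}), both typically smaller than the interior values.
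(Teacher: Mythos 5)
First, be aware that the paper does not prove this statement: it appears only as a conjecture, supported by the computations up to $n=21$ displayed in Figure~\ref{fig.secondaries}, and the author's sole hint toward a proof is that the series $z^{e_1}+\dots+z^{e_t}=(1-z^{d_1})\cdots(1-z^{d_m})H(I(G),z)$ can be computed directly from the Hilbert series, i.e.\ that a careful coefficient analysis (via Molien/P\'olya) might settle it. So there is no argument of record to compare yours against. Your opening reduction is correct and is the standard way to frame the problem: since $I(G)$ is Cohen--Macaulay, $(\theta_1,\dots,\theta_m)$ is a regular sequence, the secondary invariants descend to a vector space basis of the Artinian reduction $A=I(G)/(\theta_1,\dots,\theta_m)$, and the conjecture is precisely unimodality (away from the endpoints) of the Hilbert function of $A$.

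The remainder, however, is a research program rather than a proof, and its central mechanism does not deliver the statement even in principle. A Lefschetz element of degree $k$ gives maximal-rank maps $A_d\to A_{d+k}$, hence comparisons between $h_d$ and $h_{d+k}$; only $k=1$ yields unimodality of \emph{consecutive} coefficients. But whenever the system of parameters contains a degree-one invariant (the first power sum is always invariant for a permutation group, and is the natural choice of $\theta_1$), one has $A_1=0$ while $A_2\neq0$ in general, so no degree-one Lefschetz element exists, and the higher-degree substitute you propose cannot close the gap between consecutive degrees. Moreover the conjecture quantifies over \emph{all} systems of parameters, so you cannot normalize to the power sums; and, as you yourself concede, the weak Lefschetz property fails for some Artinian algebras, so a structural input specific to permutation-group invariant rings would be indispensable and is not identified. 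The special cases you verify (the coinvariant algebra of $\sym_m$, whose Hilbert series $\prod_{i=1}^m(1+z+\dots+z^{i-1})$ is unimodal by Sylvester's theorem) are correct but are exactly the classically known instances. In short: the reduction to the Hilbert function of $A$ is right, the tractable cases are right, but the general step is missing, and the proposed tool cannot by itself produce the required inequalities $h_d\le h_{d+1}$ (or $h_d\ge h_{d+1}$ past the peak). The statement should be regarded as remaining open.
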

We recall that this series can be computed directly from the Hilbert
series. Therefore a careful study of the Hilbert series might yield a
simple proof of this conjecture.

\section{The invariant ring over digraphs}

\label{digraphs}

In~\cite{Grigoriev.1979}, Grigoriev introduced a related invariant
ring, the \emph{invariant ring over digraphs} (digraphs are directed
graphs, with loops). The definition is similar to the one for the
invariant ring over graphs, but there are $n^2$ variables
$(x_{1,1},x_{1,2},\dots,x_{n,n})$, indexed by the pairs $(i,j)$ of
$\{1,\dots,n\}$. The action of $\sym_n$ is then defined by
$\sigma\cdot x_{i,j}:=x_{\sigma(i),\sigma(j)}$. In this section, we denote
by $\OInv$ the invariant ring over \emph{digraphs}. More generally,
Grigoriev defined the invariant ring over oriented $k$-hypergraphs,
with $n^k$ variables indexed by $k$-uples of $\{1,\dots,n\}$.

Lemma~1 of \cite{Grigoriev.1979} states that $\OInv$ is generated by
the invariants $\exps\g$, where $\g$ is a simple digraph. The proof is
said to be an easy generalization of the usual proof of the
fundamental theorem of symmetric functions. This surprised us, since
we proved this was false in $\Inv$ (theorem~\ref{th.not_gen} (ii)).
Therefore, we checked the condition~\ref{cond.dimension}, which failed
even for $n=3$ and degree $5$. We then ran \permuvar to try to compute
a minimal generating set using only simple digraphs. It failed as
expected, and produced the two following very small invariants, which
are not generated by simple digraphs:
\begin{xalignat*}{2}
  \pgraph{autogen/udigraph-020001000},&&
  \pgraph{autogen/udigraph-010002000}.
\end{xalignat*}
These counter-examples to lemma~1 of \cite{Grigoriev.1979} can also
easily be checked by hand.

This was disappointing. Indeed, the invariant ring $\Inv$ is the
quotient ring of $\OInv$ by the ideal generated by $x_{i,i}=0$ and
$x_{i,j}=x_{j,i}$. Therefore, we could have used lemma~1 to prove that
$\Inv$ is generated by the invariants where each variable appears with
degree at most $2$. This would provide a pretty good degree bound
$\beta(\Inv)\le 2\binom n 2$. Moreover, we could use this together
with computations of partial minimal generating sets to prove that
$\Inv[5]$ is generated by the invariants $\exps\g$, where $\g$ is a
multigraph with at least one isolated vertex, result of interest for
the reconstruction problem.

Most of the results on $\Inv$ apply as well for $\OInv$, but since the
number of variables is greater, the computations are even harder than
for $\Inv$, even if we ignore loops.

\section{The field of invariant fractions}

\label{invariant_field}

The \emph{field of invariants} $\K(x_\ij)^{\sym_n}$ is the subfield of
all rational fractions of $\K(x_\ij)$ which remain invariant under the
action of the group. The following classical lemma is valid for any
finite group of matrices.
\begin{lemma}
  \label{lem.fraction}
  The field of invariant is exactly the field of fractions of the
  invariant ring.
\end{lemma}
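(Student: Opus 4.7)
The plan is to establish the two inclusions $\text{Frac}(I(G))\subseteq \K(x_1,\dots,x_m)^{G}$ and $\K(x_1,\dots,x_m)^{G}\subseteq \text{Frac}(I(G))$, where I write the lemma in the general form suggested by the excerpt (for any finite $G\subseteq\GL(V)$, and specialize to $G=\sym_n$ acting on the $x_\ij$).

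First, the easy inclusion: if $p,q\in I(G)$ with $q\neq 0$, then for every $\sigma\in G$ we have $\sigma\cdot(p/q)=(\sigma\cdot p)/(\sigma\cdot q)=p/q$, so $p/q$ lies in the invariant field. This gives $\text{Frac}(I(G))\subseteq \K(x_1,\dots,x_m)^{G}$.

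For the reverse inclusion, I would use the standard orbit-product trick. Let $f\in \K(x_1,\dots,x_m)^{G}$ and write $f=p/q$ with $p,q\in\K[x_1,\dots,x_m]$, $q\neq 0$. Consider
\begin{displaymath}
  Q:=\prod_{\sigma\in G}\sigma\cdot q,\qquad
  P:=p\cdot\!\!\prod_{\sigma\in G\setminus\{e\}}\!\!\sigma\cdot q,
\end{displaymath}
so that $f=p/q=P/Q$. The polynomial $Q$ is $G$-invariant because the action of $G$ merely permutes the factors $\sigma\cdot q$ of the product. The polynomial $P=fQ$ is then a product of the two invariant elements $f$ and $Q$ (viewed inside $\K(x_1,\dots,x_m)$), hence is itself invariant; since it is a polynomial, $P\in I(G)$. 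Therefore $f=P/Q\in\text{Frac}(I(G))$, which gives the reverse inclusion.

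There is no real obstacle here: the only thing one must check is that $Q\neq 0$, which is immediate because $\K[x_1,\dots,x_m]$ is an integral domain and each factor $\sigma\cdot q$ is nonzero. Combining the two inclusions yields $\K(x_1,\dots,x_m)^{G}=\text{Frac}(I(G))$, and specializing to $G=\sym_n$ acting on the variables $x_\ij$ gives the statement of the lemma.
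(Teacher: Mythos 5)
Your proof is correct and is essentially the paper's own argument: the paper's one-line proof (``by averaging over the group, write any invariant fraction as $\frac pq$ with $p$ and $q$ invariant polynomials'') is precisely the symmetrization of the denominator by $\prod_{\sigma\in G}\sigma\cdot q$ that you spell out. Your observation that $P=fQ$ is invariant as a product of invariants, together with the nonvanishing of $Q$ in an integral domain, completes the argument exactly as intended.
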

\begin{proof}
  By averaging over the group, write any invariant fraction as $\frac
  pq$ where $p$ and $q$ are invariant polynomials.
\end{proof}

In~\cite{Grigoriev.1979}, Grigoriev used basic Galois theory to prove
the existence of a generating set of the field of invariants
composed of $m+1$ invariants of degree less than $m$. The principle is
to first take the $m$ elementary symmetric polynomials, and to
consider the subfield of symmetric fractions. Since the ground field
$\K$ has characteristic zero (this would be also the case for any
normal ground field, like a finite field), the primitive element
theorem applies: there exist a primitive element $p$ which generates
the field of invariants over the field of symmetric fractions.
Therefore, the $m$ elementary symmetric polynomials together with $p$
generates the field of invariants.

However, Grigoriev did not provide a way to construct such an element.
Moreover, the proof that it could be chosen of degree less than $m$
was incorrect, since it relied on lemma~1 of \cite{Grigoriev.1979}
which we disproved in \S~\ref{digraphs}.

\begin{theorem}
  Let $n\ge5$.  The field of invariants over graphs (respectively over
  digraphs) is generated by the elementary symmetric polynomials
  together with:
  \begin{xalignat*}{2}
    p:=\pgraph{autogen/ugraph-1100000000}, &&
    \text{ respectively }
    p:=\pgraph{autogen/udigraph-0110000000000000000000000}.
  \end{xalignat*}
\end{theorem}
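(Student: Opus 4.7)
The strategy is a straightforward Galois-theoretic argument. View $\sym_n$ as a subgroup of the full symmetric group $\sym_m$ on the $m=\binom{n}{2}$ variables $x_\ij$, via its action on unordered pairs. Since $\K$ has characteristic zero, Artin's theorem makes $\K(x_\ij)/\K(x_\ij)^{\sym_m}$ a Galois extension with group $\sym_m$, and by the fundamental theorem of symmetric functions its fixed field is generated by the $m$ elementary symmetric polynomials. Applying the Galois correspondence to the intermediate field $\K(x_\ij)^{\sym_m}(p)$ yields $\K(x_\ij)^{\sym_m}(p)=\K(x_\ij)^H$, where $H\subseteq\sym_m$ is the stabilizer of $p$ under the variable-permutation action. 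Because $p$ is $\sym_n$-invariant by construction, $\sym_n\subseteq H$ holds automatically, so the theorem reduces to verifying the reverse inclusion $H\subseteq\sym_n$.

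For this remaining identification, I would interpret $p$ combinatorially. Expanding the exponential, $p$ is exactly the sum, each with coefficient $1$, of the squarefree degree-$2$ monomials $x_e x_f$ with $e\neq f$ two edges of $K_n$ sharing a vertex. Equivalently, $p$ is the edge-indicator polynomial of the simple graph $\Gamma$ on the $m$-element vertex set of edges of $K_n$, in which two vertices are joined iff the corresponding edges meet. This $\Gamma$ is precisely the line graph $L(K_n)$ (equivalently, the triangular graph, or Johnson graph $J(n,2)$). For any indicator polynomial of a simple graph, the stabilizer under coordinate permutation equals the automorphism group of the graph, so $H=\Aut(L(K_n))$.

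The proof is therefore reduced to the classical rigidity statement $\Aut(L(K_n))=\sym_n$ for $n\geq 5$. This is exactly Whitney's theorem on line graphs: every automorphism of $L(K_n)$ is induced by a unique vertex permutation of $K_n$. The small-$n$ exceptions $L(K_3)=K_3$ and $L(K_4)\cong K_{2,2,2}$, both of which admit additional automorphisms, are precisely the reason for the hypothesis $n\geq 5$. Combining everything yields $H=\sym_n$ and hence $\K(x_\ij)^{\sym_n}=\K(x_\ij)^{\sym_m}(p)$, as required.

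For the digraph case the same Galois strategy goes through with $\sym_{n^2}$ in place of $\sym_m$. The chosen invariant now encodes a simple digraph on the $n^2$ variables $x_{i,j}$, with an arc from $x_{i,j}$ to $x_{j,k}$ whenever $i,j,k$ are distinct. The main obstacle, and the only real technical point, is the analogous rigidity statement that this digraph on $n^2$ vertices has automorphism group exactly $\sym_n$ for $n\geq 5$; I would establish this by a direct combinatorial argument that first recovers the vertex set $\{1,\dots,n\}$ from the incidence pattern of $p$ (each $i$ is characterized by the set of variables $\{x_{i,j}\}\cup\{x_{j,i}\}$) and then reduces to a Whitney-type argument on the resulting $K_n$.
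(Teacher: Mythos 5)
Your proposal is correct and takes essentially the same route as the paper: the paper's entire proof is the Galois-theoretic reduction plus the unproved ``key fact'' that a permutation of the variables fixes $p$ if and only if it lies in the image of $\sym_n$, which is exactly your identification of the stabilizer of $p$ with $\Aut(L(K_n))=\sym_n$ via Whitney's theorem (and the analogous rigidity statement for digraphs). You in fact supply more detail than the paper does on why the key fact holds and why $n\ge5$ is needed.
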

\begin{proof}
  Key fact: in both cases a permutation of the edges belongs to the
  group if and only if it leaves $p$ invariant.
\end{proof}

Grigoriev also stated that such a generating set would be a complete
system of invariants. This is incorrect since, unlike a generating set
of the invariant ring, a generating set of the field of fraction is
not necessarily a complete system of invariants. For example, our
generating sets do not separate the following pairs of non-isomorphic
graphs:
\begin{xalignat*}{2}
  \left\{
    \graph{autogen/ugraph-1110000000},
    \graph{autogen/ugraph-1101000000}
  \right\}; &&
  \left\{
    \graph{autogen/udigraph-0110000000000110000000000},
    \graph{autogen/udigraph-0110000000000000000000110}
  \right\}.
\end{xalignat*}

In some cases, the field of invariants can be used to indirectly apply
Galois theory on the invariant ring.
\begin{theorem}
  If $n\neq4,5,6,8$, there is no intermediate invariant ring of matrix
  group between the ring of symmetric polynomials (respectively the
  ring of alternate polynomials for $n$ even) and the ring of
  invariants $\Inv$.
\end{theorem}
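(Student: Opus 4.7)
The plan is to translate the statement on invariant rings into a question about the lattice of subgroups of $\sym_m$, where $m=\binom n 2$, via the field of invariants. By Artin's lemma, the extension $\K(x_\ij)/\K(x_\ij)^{\sym_m}$ is Galois with group $\sym_m$, so the Galois correspondence provides a bijection between subgroups $H\subseteq\sym_m$ and intermediate fields over $\K(x_\ij)^{\sym_m}$, via $H\mapsto\K(x_\ij)^H$.

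First, I would show that any finite matrix group $G$ with $I(\sym_m)\subseteq I(G)\subseteq\Inv$ is, up to equality of invariant rings, a subgroup of $\sym_m$. Indeed, $G$ fixes every elementary symmetric polynomial, hence preserves the factorization $\prod_i(T-x_i)$; by unique factorization in $\K[x_\ij][T]$ this forces $G$ to permute the variables, so $G\subseteq\sym_m$. Passing to fraction fields via Lemma~\ref{lem.fraction} and applying the Galois correspondence to $\K(x_\ij)^G\subseteq\K(x_\ij)^{\sym_n}$ then yields $\sym_n\subseteq G$, and distinct subgroups give distinct invariant rings. Thus intermediate invariant rings between $I(\sym_m)$ and $\Inv$ are in bijection with subgroups $\sym_n\subseteq G\subseteq\sym_m$. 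When $n$ is even, the sign computation of \S~\ref{Gorenstein} shows $\sym_n\subseteq\mathcal{A}_m$, so $I(\mathcal{A}_m)$ is itself an intermediate invariant ring; the relevant range becomes $\sym_n\subseteq G\subseteq\mathcal{A}_m$, which explains the switch to alternate polynomials in the statement.

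The theorem is therefore equivalent to a purely group-theoretic assertion: for $n\neq 4,5,6,8$, the image of $\sym_n$ acting on pairs is a maximal subgroup of $\sym_m$ when $n$ is odd, and a maximal subgroup of $\mathcal{A}_m$ when $n$ is even. This is the main obstacle and is genuinely non-trivial. I would invoke the O'Nan--Scott classification of maximal subgroups of symmetric and alternating groups, specialised to overgroups of the action of $\sym_n$ on $k$-subsets (as in the Liebeck--Praeger--Saxl analysis). The four exceptional values match known exceptional overgroups: the inclusion $\sym_4\subset\mathcal{A}_6$, the action of $P\Gamma L(2,9)$ of degree $10$ lying above $\sym_5$, the exceptional outer automorphism of $\sym_6$, and the exceptional embedding of $\sym_8$ as an intermediate subgroup of $\mathcal{A}_{28}$. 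For a more self-contained treatment, one could instead use that the pair action of $\sym_n$ is primitive for $n\geq 5$ and then apply Jordan-type bounds on primitive overgroups to handle $n\geq 9$, dealing with the remaining small cases by direct enumeration of primitive groups of the relevant small degrees.
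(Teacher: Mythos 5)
Your proposal is correct and follows essentially the same route as the paper: both reduce, via Lemma~\ref{lem.fraction} and the Galois correspondence on the field of invariants, to the maximality of the pair action of $\sym_n$ in $\sym_m$ (resp.\ $\mathcal{A}_m$ for $n$ even) outside $n=4,5,6,8$, which the paper simply cites from Faradzev et al.\ while you invoke the O'Nan--Scott/Liebeck--Praeger--Saxl analysis. Your explicit check that any matrix group whose invariant ring contains the symmetric polynomials must permute the variables fills in a step the paper leaves implicit, but it is not a different approach.
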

\begin{proof}
  For $n\neq4,5,6,8$, the group $\G$ is a maximal proper subgroup of
  the symmetric group $\sym_m$ (respectively the alternate group
  $\mathcal A_m$ for $n$ even) \cite{Faradzev_al.IATCO}. Basic Galois
  theory then proves the theorem for the field of invariants, and
  lemma~\ref{lem.fraction} transfers it back to the invariant ring.
\end{proof}

\section{Conclusion}

Invariant theory provides both very general tools and algorithms to
study the invariant ring $\Inv$ over graphs. Unfortunately, the
computer exploration of small cases appears to be very hard and shows
that those tools and algorithms lack accuracy and efficiency for our
particular invariant ring. However, we could still obtain a few
results, formulate conjectures related to $\Inv$, and solve a problem
arising from graph theory.

\section*{Acknowledgments}

This research was partially funded by the Région Rhône-Alpes. We
gratefully thank A.~Garsia, J-C. Faugère and N.~Wallach for the time
invested trying to compute a Gröbner basis for
conjecture~\ref{conj.primaries}. In particular, discussions with
A.~Garsia where very helpful, and raised decisive ideas. Finally, we
would like to thank M. Pouzet for introducing and guiding us through
this beautiful subject.

\bibliographystyle{acm}
\bibliography{main}

\end{document}